\documentclass{amsart}
\linespread{1.15}

\usepackage{amsmath, amssymb, mathrsfs, verbatim}
\newcommand{\mathsym}[1]{{}}

\newtheorem{thm}{Theorem}[section]

\newtheorem{prop}[thm]{Proposition}
\newtheorem{crl}[thm]{Corollary}

\theoremstyle{definition}
\newtheorem{dfn}[thm]{Definition}
\newtheorem{exm}[thm]{Example}

\newtheorem{rem}[thm]{Remark}

\newcommand{\reals}{\mathbb{R}}
\newcommand{\naturals}{\mathbb{N}}
\newcommand{\ux}{\underline{X}}
\newcommand{\sgr}[2]{#1[#2]}
\newcommand{\rx}{\sgr{\mathbb{R}}{\ux}}
\newcommand{\sos}{\sum\mathbb{R}[\underline{X}]^2}
\newcommand{\SOBS}{SOBS}

\begin{document}

\title{lower bounds for polynomials using geometric programming}
 \date{}

\author{Mehdi Ghasemi}
\author{Murray Marshall}
\address{Department of Mathematics and Statistics,
University of Saskatchewan,
Saskatoon, \newline \indent
SK S7N 5E6, Canada}
\email{mehdi.ghasemi@usask.ca, marshall@math.usask.ca}

\keywords{Positive polynomials, sums of squares, optimization, geometric programming}
\subjclass[2010]{Primary 12D15 Secondary 14P99, 90C25}

\begin{abstract}
We make use of a result of Hurwitz  and Reznick \cite{Hu} \cite{BR2},  and a consequence of this result due to Fidalgo and Kovacec \cite{FK},
to determine a new sufficient condition for a polynomial $f\in\mathbb{R}[X_1,\dots,X_n]$ of even degree to be a sum of squares. This result generalizes a result of Lasserre in \cite{JL} and a result of Fidalgo and Kovacec in \cite{FK}, and it also generalizes the improvements of these results given in \cite{MGHMM}. We apply this result to obtain a new lower bound $f_{gp}$ for $f$, and we explain how $f_{gp}$ can be computed using geometric programming. The lower bound $f_{gp}$ is generally not as good as the lower bound $f_{sos}$ introduced by Lasserre \cite{JLSDP} and Parrilo and Sturmfels \cite{PS}, which is computed using semidefinite programming, but a run time comparison shows that, in practice, the computation of $f_{gp}$ is much faster. The computation is simplest when the highest degree term of $f$ has the form $\sum_{i=1}^n a_iX_i^{2d}$, $a_i>0$, $i=1,\dots,n$.
The lower bounds for $f$ established in \cite{MGHMM} are obtained by evaluating the objective function of the geometric program at the appropriate feasible points.
\end{abstract}

\maketitle
%
\section{Introduction}
%
Fix a non-constant polynomial $f\in\rx=\reals[X_1,\cdots,X_n]$, where $n\ge1$ is an integer number, and let $f_*$ be the global minimum of $f$,
defined by
\[
	f_*:=\inf\{f(\underline{a})~:~\underline{a}\in\reals^n\}.
\]
We say $f$ is positive semidefinite (PSD) if $f(\underline{a})\ge0$ $\forall \underline{a}\in\reals^n$. Clearly
\[
	\inf\{ f(\underline{a}) : \underline{a} \in \reals^n\}=\sup\{r\in\reals~:~f-r\textrm{ is PSD}\},
\]
so finding $f_*$ reduces to determining when $f-r$ is PSD.

Suppose that $\deg(f)=m$ and decompose $f$ as $f=f_0+\cdots+f_m$ where $f_i$ is a form with $\deg(f_i)=i$, $i=0,\ldots,m$. This
decomposition is called the homogeneous decomposition of $f$. A necessary condition for $f_* \ne -\infty$ is that $f_m$ is PSD
(hence $m$ is even). A form $g\in\rx$ is said to be positive definite (PD) if $g(\underline{a})>0$ for all $\underline{a}\in\reals^n$, $\underline{a}\neq\underline{0}$.
A sufficient condition for $f_* \ne -\infty$ is that $f_m$ is PD \cite{M3}.

It is known that deciding when a polynomial is PSD is NP-hard \cite[Theorem 1.1]{CmplxtyNO}.
Deciding when
a polynomial is a sums of squares (SOS) is much easier. 
Actually, there is a polynomial time
method, known as semidefinite programming (SDP), which can be used to decide when a polynomial $f\in\rx$ is SOS \cite{JLSDP} \cite{PS}.
Note that any SOS polynomial is obviously PSD, so it is natural to ask if the converse is true, i.e. is every PSD polynomial SOS?
This question first appeared in Minkowski's thesis and he guessed that in general the answer is NO. Later, in \cite{Hil}, Hilbert gave a complete
answer to this question, see \cite[Section 6.6]{B-C-R}. Let us denote the cone of PSD forms of degree $2d$ in $n$ variables by $P_{2d,n}$
and the cone of SOS forms of degree $2d$ in $n$ variables by $\Sigma_{2d,n}$. Hilbert proved that
\textit{$P_{2d,n}=\Sigma_{2d,n}$ if and only if ($n\leq2$) or ($d=1$) or ($n=3$ and $d=2$)}.

Let $\sos$ denote the cone of all SOS polynomials in $\rx$ and, for $f\in\rx$, define
\[
	f_{sos}:=\sup\{r\in\reals~:~f-r\in\sos\}.
\]
Since SOS implies PSD, $f_{sos}\leq f_*$. Moreover, if $f_{sos}\neq-\infty$ then $f_{sos}$ can be computed in polynomial time, as close as
desired, using SDP \cite{JLSDP} \cite{PS}. We denote by $P_{2d,n}^{\circ}$ and $\Sigma_{2d,n}^{\circ}$, the interior of $P_{2d,n}$ and $\Sigma_{2d,n}$ in the vector space of forms of degree $2d$ in $\rx$, equipped with the euclidean topology.
A necessary condition for $f_{sos} \ne -\infty$ is that $f_{2d} \in \Sigma_{2d,n}$. A sufficient condition for $f_{sos} \ne -\infty$ is
that $f_{2d} \in \Sigma_{2d,n}^{\circ}$ \cite[Proposition. 5.1]{M2}.

In Section \ref{ScfSOS}, we recall the Hurwitz-Reznick result (Theorem \ref{HuRz}) and a corollary of the Hurwitz-Reznick result due to Fidalgo and Kovacek (Corollary \ref{FKkeythm}). For the convenience of the reader we include proofs of these results. Using the latter result, we determine a sufficient condition, in terms of the coefficients, for a form $f$ of degree $2d$ to be SOS (Theorem \ref{SuffCnd}). We explain how Theorem \ref{SuffCnd} can be applied to derive various concrete
criteria, in terms of the coefficients, for a form to be SOS, including results proved earlier by Lasserre \cite[Theorem 3]{JL}, Fidalgo and Kovacec \cite[Theorem 4.3]{FK}, and Ghasemi and Marshall \cite[Section 2]{MGHMM}. 

In Section \ref{AtGP}, we use Theorem \ref{SuffCnd} to establish a new lower bound $f_{gp}$ for $f$ and we explain how $f_{gp}$ can be computed using geometric programming.
An advantage of the method is that solving a geometric program is almost as fast as solving a linear program.
Although the lower bound found by this method is typically not as good as the lower bound found using SDP, a practical comparison shows that the computation is much faster, and larger problems can be handled.

In Section \ref{ExpBnds} we explain how results in Section \ref{AtGP} imply and improve on the results in  \cite[Section 3]{MGHMM}.

In this paper we denote by $\naturals$ the set of nonnegative integers $\{0,1,2,\ldots\}$. For $\ux=(X_1,\ldots,X_n)$, $\underline{a}=(a_1,\ldots,a_n)\in\reals^n$
and $\alpha=(\alpha_1,\ldots,\alpha_n)\in\mathbb{N}^n$,
define $\ux^{\alpha}:=X_1^{\alpha_1}\cdots X_n^{\alpha_n}$,
$|\alpha| := \alpha_1+\cdots+\alpha_n$ and $\underline{a}^{\alpha}:=a_1^{\alpha_1}\cdots a_n^{\alpha_n}$ with the convention $0^0=1$.
Clearly, using these notations, every polynomial $f\in\rx$ can be written as
$f(\ux)=\sum_{\alpha\in\naturals^n}f_{\alpha}\ux^{\alpha}$,
where $f_{\alpha}\in\reals$ and $f_{\alpha}=0$, except for finitely many $\alpha$. Assume now that $f$ is non-constant and has even degree.
Let $\Omega(f)=\{\alpha\in\naturals^n~:~f_{\alpha}\neq0\}\setminus\{\underline{0},2d\epsilon_1,\dots,2d\epsilon_n\}$, where
$2d=\deg(f)$, $\epsilon_i=(\delta_{i1},\dots,\delta_{in})$,  and
\[
\delta_{ij}=\left\lbrace
\begin{array}{lr}
	1 & i=j\\
	0 & i\neq j.
\end{array}\right.
\]
We denote $f_{\underline{0}}$ and $f_{2d\epsilon_i}$ by $f_0$ and $f_{2d,i}$ for short. Thus $f$
has the form
\begin{equation}\label{polyformat}
f=f_0+\sum_{\alpha\in\Omega(f)}f_{\alpha}\ux^{\alpha}+\sum_{i=1}^nf_{2d,i}X_i^{2d}.
\end{equation}
Let $\Delta(f)=\{\alpha\in\Omega(f):~f_{\alpha}\ux^{\alpha}\text{ is not a square in }\rx\}=\{\alpha\in\Omega(f):\text{either } f_{\alpha}<0\text{ or }\alpha_i\text{ is odd for some } 1\leq i\leq n\}$. Since the polynomial $f$ is usually fixed, we will often
denote $\Omega(f)$ and $\Delta(f)$ just by $\Omega$ and $\Delta$ for short.

Let $\bar{f}(\ux,Y)=Y^{2d}f(\frac{X_1}{Y},\ldots,\frac{X_n}{Y})$. From \eqref{polyformat} it is clear that
\[
\bar{f}(\ux,Y)=f_0Y^{2d}+\sum_{\alpha\in\Omega}f_{\alpha}\ux^{\alpha}Y^{2d-|\alpha|}+\sum_{i=1}^nf_{2d,i}X_i^{2d}
\]
is a form of degree $2d$, called the homogenization of $f$. We have the following well-known result:
\begin{prop}
	\label{homodehomo}
$f$ is PSD if and only if $\bar{f}$ is PSD. $f$ is SOS if and only if $\bar{f}$ is SOS.
\end{prop}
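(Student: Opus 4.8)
The plan is to prove both equivalences by passing between $f$ and $\bar f$ via the substitution $Y=1$ (dehomogenization) in one direction and homogenization of individual polynomials in the other.

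First I would treat the PSD equivalence. For the ``if'' direction, suppose $\bar f$ is PSD; then for every $\underline a\in\reals^n$ we have $f(\underline a)=\bar f(\underline a,1)\ge0$, so $f$ is PSD. For the ``only if'' direction, suppose $f$ is PSD and fix $(\underline a,b)\in\reals^{n+1}$. If $b\neq0$, then $\bar f(\underline a,b)=b^{2d}f(a_1/b,\dots,a_n/b)\ge0$, since $b^{2d}\ge0$ and $f$ is PSD. If $b=0$, I would invoke continuity of the polynomial $\bar f$: since $\bar f(\underline a,b)\ge0$ for all $b\neq0$, letting $b\to0$ gives $\bar f(\underline a,0)\ge0$. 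Hence $\bar f$ is PSD.

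Next I would treat the SOS equivalence. For the ``if'' direction, if $\bar f=\sum_j g_j^2$ with $g_j\in\reals[\ux,Y]$, then substituting $Y=1$ yields $f=\sum_j g_j(\ux,1)^2$, a sum of squares in $\rx$. For the ``only if'' direction, write $f=\sum_j h_j^2$ with $h_j\in\rx$. The key step is to observe that each $h_j$ has degree at most $d$: writing $e=\max_j\deg(h_j)$, the degree-$2e$ homogeneous part of $\sum_j h_j^2$ equals the sum of the squares of the leading forms of those $h_j$ with $\deg(h_j)=e$, and this sum of squares of nonzero forms cannot vanish; hence $2e=\deg(f)=2d$, so $e=d$ and every $h_j$ has degree $\le d$. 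It follows that each homogenization $\bar h_j(\ux,Y)=Y^dh_j(X_1/Y,\dots,X_n/Y)$ is a genuine form of degree $d$, since the exponent $d-|\beta|$ attached to $Y$ is nonnegative for every monomial $\ux^\beta$ occurring in $h_j$. Then
\[
\bar f(\ux,Y)=Y^{2d}\sum_j h_j(X_1/Y,\dots,X_n/Y)^2=\sum_j \bar h_j(\ux,Y)^2,
\]
exhibiting $\bar f$ as a sum of squares.

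The main obstacle is the degree-counting argument in the SOS ``only if'' direction: one must ensure that homogenizing the summands $h_j$ produces polynomials rather than expressions with negative powers of $Y$, and this is exactly what the bound $\deg(h_j)\le d$ guarantees. The PSD ``only if'' direction requires only the mild additional care of a continuity argument to cover the hyperplane $Y=0$, where the algebraic identity $\bar f(\underline a,b)=b^{2d}f(a_1/b,\dots,a_n/b)$ breaks down.
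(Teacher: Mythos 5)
Your proof is correct. The paper itself gives no argument here---it simply cites \cite[Proposition 1.2.4]{M1}---and your write-up is precisely the standard proof found in that reference: dehomogenize by setting $Y=1$ for the easy directions, use $\bar f(\underline a,b)=b^{2d}f(\underline a/b)$ plus continuity across the hyperplane $Y=0$ for PSD, and for SOS use the degree bound $\deg(h_j)\le d$ (justified, as you do, by the fact that the top-degree homogeneous parts of the $h_j$ cannot cancel in a sum of squares) so that each summand homogenizes to a genuine form of degree $d$. In particular, your degree-counting step is exactly the point that makes the SOS direction work, and your handling of it is complete.
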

\begin{proof}
See \cite[Proposition 1.2.4]{M1}.
\end{proof}
%
\section{Sufficient conditions for a form to be SOS}\label{ScfSOS}
%
We recall the following result, due to Hurwitz and Reznick. 
\begin{thm}[Hurwitz-Reznick]\label{HuRz}
 Suppose $p(\ux)=\sum_{i=1}^n\alpha_iX_i^{2d}-2dX_1^{\alpha_1}\cdots X_n^{\alpha_n}$,
where $\alpha=(\alpha_1,\ldots,\alpha_n)\in\mathbb{N}^n$,
 $|\alpha|=2d$. Then $p$ is \SOBS.
\end{thm}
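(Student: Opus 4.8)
The plan is to read $p$ as an instance of the weighted arithmetic--geometric inequality and to produce the sharper \SOBS{} certificate by Reznick's method of mediated sets. First I would normalize: dividing by $2d$ and writing $\lambda_i=\alpha_i/2d$, we have $\lambda_i\ge0$, $\sum_i\lambda_i=1$, and $\alpha=\sum_{i=1}^n\lambda_i(2d\epsilon_i)$, so that
\[
\frac{1}{2d}\,p=\sum_{i=1}^n\lambda_i X_i^{2d}-\ux^{\alpha}.
\]
Thus, up to the positive factor $2d$, $p$ is the agiform attached to the node $\alpha$, which lies in the simplex $\Delta$ whose vertices $2d\epsilon_1,\dots,2d\epsilon_n$ are \emph{even} lattice points. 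That $p$ is at least PSD is immediate from weighted AM--GM applied to $X_i^{2d}=|X_i|^{2d}\ge0$; the real content is the exact decomposition into binomial squares.

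The engine of the argument is the identity
\[
\tfrac12\ux^{y}+\tfrac12\ux^{z}-\ux^{(y+z)/2}=\tfrac12\bigl(\ux^{y/2}-\ux^{z/2}\bigr)^2,
\]
valid whenever $y,z\in(2\naturals)^n$ and $(y+z)/2\in\naturals^n$; here $\ux^{y/2}$ and $\ux^{z/2}$ are honest monomials, so the right-hand side is a single binomial square. The plan is to telescope $\frac{1}{2d}p$ into such pieces: I would repeatedly write the current node as the midpoint of two even lattice points of $\Delta$, each step peeling off one binomial square and replacing the node-monomial by an average of two monomials with even exponents, which are then driven toward the vertices (where the agiform vanishes).

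The combinatorial input that makes this possible is the claim that every non-vertex lattice point $v$ of $\Delta$ is the average of two \emph{distinct} even lattice points of $\Delta$. I would prove this by a rounding argument that uses crucially that $2d$ is even. If $v$ has an odd coordinate, then the number of odd coordinates is even because $|v|=2d$, so I pair them and round one up and one down by $1$, obtaining distinct even $y,z$ with $y+z=2v$ and $|y|=|z|=2d$; if $v$ is even but not a vertex, two of its positive coordinates can instead be shifted by $2$ in opposite directions. Consequently $\Delta$ is \emph{mediated}: the largest set of lattice points closed under the averaging rule is all of $\Delta\cap\naturals^n$, and in particular it contains $\alpha$.

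The hard part, which is exactly the Reznick half of the theorem, is to turn mediatedness into a \emph{finite} certificate. A naive recursion need not terminate: since the weights $\lambda_i$ are generally not dyadic, bisecting repeatedly regenerates the same even monomials with geometrically shrinking coefficients, producing an infinite telescoping series rather than a finite sum. The resolution is not to follow the recursion as a tree but to treat the mediation identities as one linear system over the finitely many monomials supported on $\Delta$: the even interior monomials must cancel, and one solves for nonnegative coefficients on the squares $\bigl(\ux^{y/2}-\ux^{z/2}\bigr)^2$. Feasibility of this system is what mediatedness buys (morally, by LP duality against the very AM--GM inequality that forces $p\ge0$), and this is the step I expect to demand the most care; multiplying the resulting identity by $2d$ then recovers the claimed \SOBS{} representation of $p$.
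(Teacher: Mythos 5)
Your framework is viable, and the parts you actually prove are proved correctly: the normalization to the agiform, and the rounding lemma showing that every non-vertex lattice point $v$ of the simplex $\{v\in\naturals^n:|v|=2d\}$ is the midpoint of two \emph{distinct} even lattice points of that simplex (pair the odd coordinates of $v$ --- there are evenly many since $|v|=2d$ --- and round each pair in opposite directions), are both fine. You have also correctly identified the crux: with non-dyadic weights the telescoping recursion does not terminate, so some further idea is needed.

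The gap is that this further idea is never supplied. The assertion that the linear system in the coefficients of the squares $(\ux^{y/2}-\ux^{z/2})^2$ has a nonnegative solution \emph{is} the theorem (it is precisely Reznick's half of Hurwitz--Reznick), and the justification you offer --- ``LP duality against the very AM--GM inequality that forces $p\ge0$'' --- points at the wrong mechanism and would fail if pursued. Nonnegativity of $p$ says only that $p$ pairs nonnegatively with point evaluations and their closed convex hull; the dual cone of the cone generated by binomial squares is strictly larger than that hull (this is exactly why PSD does not imply SOS, let alone \SOBS), so PSD-ness of $p$ cannot by itself give dual feasibility. A correct duality argument must use your mediation lemma itself, and it can be made to work as follows: the cone generated by the finitely many binomial squares with even exponents in the simplex is finitely generated, hence closed, so by Farkas it suffices to show $L(p)\ge0$ for every linear functional $L$ satisfying $L(\ux^{y})+L(\ux^{z})\ge2L(\ux^{(y+z)/2})$ for all even $y,z$ in the simplex. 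Put $e(v):=L(\ux^{v})-\sum_i\frac{v_i}{2d}L(X_i^{2d})$; then $e$ vanishes at the vertices, inherits the midpoint inequality, and $L(p)=-2d\,e(\alpha)$. Among the finitely many maximizers of $e$ choose one, $v^*$, of maximal Euclidean norm $\|v^*\|$. If $v^*$ is not a vertex, write $v^*=(y+z)/2$ with $y\ne z$ even by your lemma; the midpoint inequality forces $e(y)=e(z)=e(v^*)$, while $\|y\|^2+\|z\|^2=2\|v^*\|^2+\tfrac12\|y-z\|^2>2\|v^*\|^2$ makes one of $y,z$ a maximizer of strictly larger norm, a contradiction. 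Hence $\max e=0$, so $e(\alpha)\le0$ and $L(p)\ge0$, which closes the argument. Note that the paper avoids duality altogether: it splits $\alpha=\beta+\gamma$ with $|\beta|=|\gamma|=d$, $\beta_{i_1}=0$, $\gamma_{i_2}=0$, inducts on $n$ down to $n=2$, and there resolves exactly your non-termination problem by a finiteness trick: the recursion stays inside the finite set $\mathrm{AGI}(2,d)$, hence must cycle, and from $p_i=2^{i-j}p_j+(\text{sum of binomial squares})$ with $p_i=p_j$, $i<j$, one gets that $(1-2^{i-j})p_i$ is a sum of binomial squares, hence so is $p_i$. Either route would complete your proof; as written, the heart of the theorem is missing.
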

Here, \SOBS~ is shorthand for a sum of binomial squares, i.e., a sum of squares of the form  $(a\ux^{\alpha}-b\ux^{\beta})^2$

In his 1891 paper \cite{Hu}, Hurwitz  uses symmetric polynomials in $X_1,\dots,X_{2d}$ to give an explicit representation of
$\sum_{i=1}^{2d}X_i^{2d}-2d\prod_{i=1}^{2d}X_i$ as a sum of squares. Theorem \ref{HuRz} can be deduced from this representation.
Theorem \ref{HuRz} can also be deduced from results in \cite{BR1,BR2}, specially, from \cite[Theorems 2.2 and 4.4]{BR2}. Here is another proof.

\begin{proof}
By induction on $n$. If $n = 1$ then $p = 0$ and the result is clear. Assume now that
$n\ge2$. We can assume each $\alpha_i$ is strictly positive, otherwise, we reduce to a case with at most $n-1$ variables.

Case 1: Suppose that there exist $1\leq i_1,i_2\leq n$, such that $i_1\neq i_2$, with $\alpha_{i_1}\le d$ and $\alpha_{i_2}\le d$.
Decompose $\alpha=(\alpha_1,\ldots,\alpha_n)$ as $\alpha=\beta + \gamma$ where $\beta,\gamma\in\mathbb{N}^n$,
$\beta_{i_1}=0$, $\gamma_{i_2} = 0$ and $|\beta|=|\gamma|= d$. Then
\[
	 (\ux^{\beta}-\ux^{\gamma})^2=\ux^{2\beta}-2\ux^{\beta}\ux^{\gamma}+\ux^{2\gamma}=\ux^{2\beta}-2\ux^{\alpha}+\ux^{2\gamma},
\]
therefore,
\[
\begin{array}{lll}
	p(\ux) & = & \displaystyle{\sum_{i=1}^n \alpha_iX_i^{2d}-2d\ux^{\alpha}}\\
	& = & \displaystyle{\sum_{i=1}^n \alpha_iX_i^{2d}-d(\ux^{2\beta}+\ux^{2\gamma}-(\ux^{\beta}-\ux^{\gamma})^2)}\\
	& = & \displaystyle{\frac{1}{2}\left(\sum_{i=1}^n 2\beta_iX_i^{2d}-2d\ux^{2\beta}\right)}\\
	& + & \displaystyle{\frac{1}{2}\left(\sum_{i=1}^n 2\gamma_iX_i^2d-2d\ux^{2\gamma}\right)+d(\ux^{\beta}-\ux^{\gamma})^2}.
\end{array}
\]
Each term is \SOBS, by induction hypothesis.

Case 2: Suppose we are not in Case 1. Since there is at most one $i$ satisfying $\alpha_i > d$, it
follows that $n = 2$, so $p(\ux) = \alpha_1 X_1^{2d} + \alpha_2 X_2^{2d}-2dX_1^{\alpha_1} X_2^{\alpha_2}$. We know that $p \ge 0$ on
$\mathbb{R}^2$, by the
arithmetic-geometric inequality. Since $n = 2$ and $p$ is homogeneous, it follows that $p$ is SOS.

Showing $p$ is \SOBS, requires more work. Denote by $\textrm{AGI}(2, d)$ the
set of all homogeneous polynomials of the form $p = \alpha_1 X_1^{2d} + \alpha_2 X_2^{2d} - 2dX_1^{\alpha_1} X_2^{\alpha_2}$ ,
$\alpha_1, \alpha_2\in\mathbb{N}$ and
$\alpha_1 + \alpha_2 = 2d$. This set is finite. If $\alpha_1 = 0$ or $\alpha_1 = 2d$ then $p = 0$ which is trivially \SOBS.
If $\alpha_1 = \alpha_2 = d$ then $p(\ux) = d(X_1^d-X_2^d)^2$, which is also \SOBS. Suppose now that $0 < \alpha_1 < 2d$,
$\alpha_1 \neq d$ and $\alpha_1 > \alpha_2$ (The argument for $\alpha_1 < \alpha_2$ is similar). Decompose
$\alpha = (\alpha_1 , \alpha_2 )$ as $\alpha = \beta + \gamma$, $\beta = (d, 0)$ and $\gamma = (\alpha_1 - d, \alpha_2 )$.
Expand $p$ as in the proof of Case 1 to obtain
\[
p(\ux)=\frac{1}{2}\left(\sum_{i=1}^22\beta_iX_i^{2d}-2d\ux^{2\beta}\right)+
\frac{1}{2}\left(\sum_{i=1}^22\gamma_iX_i^{2d}-2d\ux^{2\gamma}\right)+d(\ux^{\beta}-\ux^{\gamma})^2.
\]
Observe that $\sum_{i=1}^2 2\beta_i X_i^{2d}-2d\ux^{2\beta} = 0$.

Thus $p = \frac{1}{2}p_1 +d(\ux^{\beta}-\ux^{\gamma} )^2$, where
$p_1=\sum_{i=1}^22\gamma_iX_i^{2d}-2d\ux^{2\gamma}$. If $p_1$ is \SOBS~ then $p$ is also \SOBS. If $p_1$ is not \SOBS~ then we can repeat to
get $p_1=\frac{1}{2}p_2+d(\ux^{\beta'}-\ux^{\gamma'})^2$.
Continuing in this way we get a sequence $p = p_0 , p_1 , p_2 ,\cdots$ with each
$p_i$ an element of the finite set $\textrm{AGI}(2, d)$, so $p_i = p_j$ for some $i < j$. Since $p_i = 2^{i-j} p_j +$ \textit{a
sum of binomial squares}, this implies $p_i$ is \SOBS~ and hence that $p$ is \SOBS.
\end{proof}
In \cite{FK}, Fidalgo and Kovacec prove the following result, which is a corollary of the Hurwitz-Reznick result.
\begin{crl}[Fidalgo-Kovacek]\label{FKkeythm}
 For a form $p(\ux)=\sum_{i=1}^n\beta_iX_i^{2d}-\mu\ux^{\alpha}$ such that $\alpha \in \mathbb{N}^n$, $|\alpha| =2d$,
$\beta_i\ge 0$ for $i=1,\cdots,n$, and $\mu\ge0$ if all $\alpha_i$ are even, the following are equivalent:
\begin{enumerate}
	\item{$p$ is PSD.}
	\item{$\mu^{2d}\prod_{i=1}^n\alpha_i^{\alpha_i}\leq (2d)^{2d}\prod_{i=1}^n\beta_i^{\alpha_i}$.}
	\item{$p$ is \SOBS.}
	\item{$p$ is SOS.}
\end{enumerate}
\end{crl}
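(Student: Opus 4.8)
The plan is to prove the cycle of implications $(1)\Rightarrow(2)\Rightarrow(3)\Rightarrow(4)\Rightarrow(1)$. Two of these are immediate: $(3)\Rightarrow(4)$ because a sum of binomial squares is a sum of squares, and $(4)\Rightarrow(1)$ because any SOS polynomial is nonnegative on $\reals^n$. All the content sits in $(1)\Rightarrow(2)$, an evaluation/AM--GM argument, and in $(2)\Rightarrow(3)$, where Theorem \ref{HuRz} enters. Before starting I would set aside every index $i$ with $\alpha_i=0$: such an $X_i$ occurs in $p$ only through the square term $\beta_iX_i^{2d}$, it contributes only trivial factors to the products in $(2)$ (since $0^0=1$), and it is harmless for each implication (restrict to $X_i=0$ for the PSD direction, append the monomial square $(\sqrt{\beta_i}X_i^d)^2$ for the SOBS direction). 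Hence I may assume $\alpha_i>0$ for all $i$.

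For $(1)\Rightarrow(2)$, suppose $p$ is PSD. If $\mu=0$ the inequality in $(2)$ reads $0\le(2d)^{2d}\prod_{i=1}^n\beta_i^{\alpha_i}$, which holds as each $\beta_i\ge0$. If $\mu\neq0$, I first note that every $\beta_i>0$: were some $\beta_i=0$, then fixing the other coordinates so that $\ux^{\alpha}\neq0$ and letting $|X_i|\to\infty$ (adjusting a sign via an odd $\alpha_j$ when needed) would send $-\mu\ux^{\alpha}\to-\infty$, contradicting PSD. With all $\beta_i>0$ I evaluate $p$ at the point where weighted AM--GM is tight, $|a_i|=(\alpha_i/(2d\beta_i))^{1/(2d)}$, choosing signs so that $\mu\,\underline{a}^{\alpha}=|\mu|\prod_{i=1}^n|a_i|^{\alpha_i}$ (legitimate since $\mu\ge0$ when all $\alpha_i$ are even, and an odd $\alpha_i$ otherwise supplies the needed sign). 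A short computation gives $p(\underline{a})=1-|\mu|\prod_{i=1}^n|a_i|^{\alpha_i}$, so PSD forces $|\mu|\prod_{i=1}^n|a_i|^{\alpha_i}\le1$; raising to the power $2d$ and substituting $|a_i|^{2d}=\alpha_i/(2d\beta_i)$ gives exactly $(2)$.

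The main step is $(2)\Rightarrow(3)$. If $\mu=0$ then $p=\sum_{i=1}^n\beta_iX_i^{2d}$ is a sum of monomial squares, hence SOBS. If $\mu\neq0$, then $(2)$ forces $\prod_{i=1}^n\beta_i^{\alpha_i}>0$, so every $\beta_i>0$, and I rescale to reduce to the Hurwitz--Reznick form. The invertible change of variables $X_i\mapsto c_iX_i$ with $c_i=(\alpha_i/\beta_i)^{1/(2d)}$ (which preserves the SOBS property) transforms $p$ into $\sum_{i=1}^n\alpha_iX_i^{2d}-\mu'\ux^{\alpha}$, where $\mu'=\mu\prod_{i=1}^nc_i^{\alpha_i}=\mu\big(\prod_{i=1}^n\alpha_i^{\alpha_i}/\prod_{i=1}^n\beta_i^{\alpha_i}\big)^{1/(2d)}$; condition $(2)$ says exactly that $(\mu')^{2d}\le(2d)^{2d}$, i.e. $|\mu'|\le2d$. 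Since $\mu'$ has the sign of $\mu$, and $\mu<0$ can occur only when some $\alpha_i$ is odd, a substitution $X_i\mapsto-X_i$ (again preserving SOBS) lets me assume $0\le\mu'\le2d$. Then the identity
\[
\sum_{i=1}^n\alpha_iX_i^{2d}-\mu'\ux^{\alpha}=\frac{\mu'}{2d}\Big(\sum_{i=1}^n\alpha_iX_i^{2d}-2d\,\ux^{\alpha}\Big)+\Big(1-\frac{\mu'}{2d}\Big)\sum_{i=1}^n\alpha_iX_i^{2d}
\]
writes the transformed polynomial as a nonnegative combination of the Hurwitz--Reznick form (SOBS by Theorem \ref{HuRz}) and the monomial squares $\alpha_iX_i^{2d}=(\sqrt{\alpha_i}X_i^d)^2$, hence as SOBS; undoing the two substitutions, which carry binomial squares to binomial squares, shows $p$ is SOBS.

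I expect the rescaling in $(2)\Rightarrow(3)$ to be the crux: one must see that normalizing the pure-power coefficients $\beta_i$ to the exponents $\alpha_i$ is precisely what turns hypothesis $(2)$ into the bound $|\mu'|\le2d$ that lets the Hurwitz--Reznick polynomial dominate the cross term. Everything else is bookkeeping---controlling the sign of $\mu$ through an odd exponent, and disposing of the degenerate cases $\mu=0$ and $\beta_i=0$---which the reductions above already handle.
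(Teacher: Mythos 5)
Your proposal is correct and follows essentially the same route as the paper: the trivial implications $(3)\Rightarrow(4)\Rightarrow(1)$, an evaluation at the AM--GM-tight point for $(1)\Rightarrow(2)$, and for $(2)\Rightarrow(3)$ the rescaling $X_i\mapsto(\alpha_i/\beta_i)^{1/2d}X_i$ followed by writing the result as a nonnegative combination of the Hurwitz--Reznick form and monomial squares. The only differences are organizational (the paper performs all sign and degeneracy reductions up front, and its identity divides by $\mu_1$ rather than using your convex-combination form), not mathematical.
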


\begin{proof} See \cite[Theorem 2.3]{FK}. (3) $\Rightarrow$ (4) and (4) $\Rightarrow$ (1) are trivial, so it suffices to show (1) $\Rightarrow$ (2) and (2) $\Rightarrow$ (3). If some $\alpha_i$ is odd then, making the change of variables $Y_i=-X_i$, $Y_j=X_j$ for $j\ne i$, $\mu$ gets replaced by $-\mu$. In this way, we can assume $\mu \ge 0$. If some $\alpha_i$ is zero, set $X_i=0$ and proceed by induction on $n$. In this way, we can assume $\alpha_i>0$, $i=1,\dots,n$. If $\mu=0$ the result is trivially true, so we can assume $\mu >0$. If some $\beta_i$ is zero, then (2) fails. Setting $X_j=1$ for $j\ne i$, and letting $X_i \rightarrow \infty$, we see that (1) also fails. Thus the result is trivially true in this case. Thus we can assume $\beta_i>0$, $i=1,\dots,n$. 

(1) $\Rightarrow$ (2). Assume (1), so $p(x) \ge 0$ for all $x\in \mathbb{R}^n$. Taking $$x := ( (\frac{\alpha_i}{\beta_i})^{1/2d},\dots,(\frac{\alpha_n}{\beta_n})^{1/2d}),$$ we see that $$p(x) = \sum_{i=1}^n \alpha_i -\mu\prod_{i=1}^n (\frac{\alpha_i}{\beta_i})^{\alpha_i/2d} = 2d -\mu\prod_{i=1}^n (\frac{\alpha_i}{\beta_i})^{\alpha_i/2d} \ge 0,$$ so $\mu \prod_{i=1}^n (\frac{\alpha_i}{\beta_i})^{\alpha_i/2d} \le 2d$. This proves (2).

(2) $\Rightarrow$ (3). Make a change of variables $X_i = (\frac{\alpha_i}{\beta_i})^{1/2d} Y_i$, $i=1,\dots n$. Let $\mu_1 := \mu \prod_{i=1}^n (\frac{\alpha_i}{\beta_i})^{\alpha_i/2d}$ so, by (2), $\mu_1 \le 2d$, i.e., $\frac{2d}{\mu_1}\ge 1$. Then $$p(\underline{X}) = \sum_{i=1}^n \alpha_iY_i^{2d}-\mu_1\underline{Y}^{\alpha} = \frac{\mu_1}{2d}[\sum_{i=1}^n \alpha_iY_i^{2d}(\frac{2d}{\mu_1}-1)+\sum_{i=1}^n \alpha_i Y_i^{2d}-2d\underline{Y}^{\alpha}],$$ which is SOBS, by the Hurwitz-Reznick result. This proves (3).
\end{proof}

Next, we prove our main new result of this section,
which gives a sufficient condition on the coefficients for a polynomial to be a sum of
squares.
\begin{thm}\label{SuffCnd}
Suppose $f$ is a form of degree $2d$. A sufficient condition for $f$ to be \SOBS~ is that there exist nonnegative real numbers $a_{\alpha,i}$
for $\alpha\in\Delta$, $i=1,\ldots,n$ such that
\begin{enumerate}
	 \item{$\forall\alpha\in\Delta\quad(2d)^{2d}a_{\alpha}^{\alpha}=f_{\alpha}^{2d}\alpha^{\alpha}$.}
	\item{$f_{2d,i}\ge\sum_{\alpha\in\Delta}a_{\alpha,i}$, $i=1,\ldots,n$.}
\end{enumerate}
Here, $a_{\alpha}:=(a_{\alpha,1},\dots,a_{\alpha,n})$.
\end{thm}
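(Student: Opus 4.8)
The plan is to realize $f$ as a sum of binomial forms, one attached to each $\alpha\in\Delta$, each of which is SOBS by the Fidalgo--Kovacek criterion (Corollary \ref{FKkeythm}), together with a collection of honest monomial squares left over. First I would record that, since $f$ is a form of degree $2d$, we have $f_0=0$ and every $\alpha$ with $f_{\alpha}\neq0$ satisfies $|\alpha|=2d$; in particular each $\alpha\in\Delta\subseteq\Omega$ has $|\alpha|=2d$, and by definition of $\Delta$ it satisfies either $f_{\alpha}<0$ or $\alpha_i$ odd for some $i$.

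Next, for each $\alpha\in\Delta$ I would introduce the binomial form
\[
p_{\alpha}:=\sum_{i=1}^n a_{\alpha,i}X_i^{2d}+f_{\alpha}\ux^{\alpha}
\]
and check that it is SOBS by applying Corollary \ref{FKkeythm} with $\beta_i=a_{\alpha,i}\ge0$ and $\mu=-f_{\alpha}$. Its hypotheses are met: if every $\alpha_i$ is even then membership of $\alpha$ in $\Delta$ forces $f_{\alpha}<0$, so $\mu=-f_{\alpha}>0$ as required, whereas if some $\alpha_i$ is odd the corollary imposes no sign restriction on $\mu$. The crux is that condition (1) is exactly condition (2) of Corollary \ref{FKkeythm}: since $2d$ is even we have $\mu^{2d}=f_{\alpha}^{2d}$, and (1) reads $(2d)^{2d}a_{\alpha}^{\alpha}=f_{\alpha}^{2d}\alpha^{\alpha}$, which a fortiori gives $\mu^{2d}\prod_i\alpha_i^{\alpha_i}\le(2d)^{2d}\prod_i a_{\alpha,i}^{\alpha_i}$. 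Hence each $p_{\alpha}$ is SOBS. Degenerate indices with $\alpha_i=0$ cause no trouble: the factors $\alpha_i^{\alpha_i}$ and $a_{\alpha,i}^{\alpha_i}$ equal $1$ under the convention $0^0=1$, and any surviving terms $a_{\alpha,i}X_i^{2d}$ are themselves squares.

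Finally I would subtract these pieces from $f$ and read off the remainder:
\[
f-\sum_{\alpha\in\Delta}p_{\alpha}=\sum_{\alpha\in\Omega\setminus\Delta}f_{\alpha}\ux^{\alpha}+\sum_{i=1}^n\Bigl(f_{2d,i}-\sum_{\alpha\in\Delta}a_{\alpha,i}\Bigr)X_i^{2d}.
\]
Every $\alpha\in\Omega\setminus\Delta$ has $f_{\alpha}>0$ with all $\alpha_i$ even, so $f_{\alpha}\ux^{\alpha}=(\sqrt{f_{\alpha}}\,\ux^{\alpha/2})^2$ is a square; and condition (2) makes each coefficient $c_i:=f_{2d,i}-\sum_{\alpha\in\Delta}a_{\alpha,i}$ nonnegative, so the second sum is $\sum_i(\sqrt{c_i}\,X_i^d)^2$. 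Both leftover pieces are thus SOBS, and adding back the forms $p_{\alpha}$ exhibits $f$ as a sum of binomial squares.

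The argument is essentially bookkeeping once Corollary \ref{FKkeythm} is available; the only genuinely delicate point, and the step I would treat most carefully, is verifying that the corollary applies uniformly across the two parity cases --- in particular that the sign convention on $\mu$, needed only when all $\alpha_i$ are even, is automatically supplied by the definition of $\Delta$, and that the accounting of the pure-power budget in condition (2) indeed leaves nonnegative leftover coefficients $c_i$.
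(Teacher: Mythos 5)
Your proof is correct and follows essentially the same route as the paper's: apply Corollary \ref{FKkeythm} to each binomial piece $\sum_{i=1}^n a_{\alpha,i}X_i^{2d}+f_{\alpha}\ux^{\alpha}$ for $\alpha\in\Delta$, use condition (2) to absorb the pure-power budget into the coefficients $f_{2d,i}$, and observe that the terms indexed by $\Omega\setminus\Delta$ are themselves squares. The extra checks you carry out (the sign hypothesis on $\mu$ being supplied by the definition of $\Delta$, and $|\alpha|=2d$ by homogeneity) are implicit in the paper's shorter argument and are verified correctly.
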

\begin{proof}
Suppose that such real numbers exist. Then condition (1) together with Corollary \ref{FKkeythm} implies that
$\sum_{i=1}^n a_{\alpha,i}X_i^{2d} + f_{\alpha}\ux^{\alpha}$ is \SOBS~ for each $\alpha\in\Delta$, so
\[
	 \sum_{i=1}^n(\sum_{\alpha\in\Delta}a_{\alpha,i})X_i^{2d}+\sum_{\alpha\in\Delta}f_{\alpha}\ux^{\alpha}
\]
is \SOBS. Combining with (2), it follows that $\sum_{i=1}^n f_{2d,i}X_i^{2d}+\sum_{\alpha\in\Delta}f_{\alpha}\ux^{\alpha}$
is \SOBS. Since each $f_{\alpha}\ux^{\alpha}$ for $\alpha\in\Omega\setminus \Delta$ is a
square, this implies $f(\ux)$ is \SOBS.
\end{proof}
\begin{rem}

(i) From condition (1) of Theorem \ref{SuffCnd} we see that $a_{\alpha,i}=0$ $\Rightarrow$ $\alpha_i=0$.
(ii) Let $a$ be an array of real numbers satisfying the conditions of Theorem \ref{SuffCnd}, and define the array $a^*=(a_{\alpha,i}^*)$ by
\[
	a_{\alpha,i}^*=\left\lbrace\begin{array}{ll}
		a_{\alpha,i} & \textrm{if } \alpha_i\neq 0\\
		0 & \textrm{if } \alpha_i=0.
	\end{array}\right.
\]
Then $a^*$ also satisfies the conditions of Theorem \ref{SuffCnd}. Thus we are free to require the converse condition $\alpha_i = 0$ $\Rightarrow$ $a_{\alpha,i} = 0$ too, if we want.
\end{rem}

We mention 
some corollaries of Theorem \ref{SuffCnd}. Corollaries \ref{JLsos} and \ref{newfk} were known earlier. Corollary \ref{fkimp} is an improved version of Corollary \ref{newfk}. Corollary \ref{newcrt} is a new result.

\begin{crl}See \cite[Theorem 3]{JL} and \cite[Theorem 2.2]{MGHMM}.\label{JLsos}
For any polynomial $f\in \mathbb{R}[\underline{X}]$ of degree $2d$, if

\medskip
{\rm (L1)}  \ \ $f_0\ge \sum\limits_{\alpha\in \Delta} |f_{\alpha}| \frac{2d-|\alpha|}{2d}$  \ and   \ {\rm (L2)}  \ \ $f_{2d,i} \ge \sum\limits_{\alpha\in \Delta} |f_{\alpha}| \frac{\alpha_i}{2d}$, \ \ $i=1,\dots,n$,

\smallskip
\noindent
then $f$ is a sum of squares.
\end{crl}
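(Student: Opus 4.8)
The plan is to reduce to the homogeneous setting, where Theorem \ref{SuffCnd} applies verbatim, and then to exhibit an explicit array $(a_{\alpha,i})$ that converts the hypotheses (L1), (L2) into conditions (1), (2) of that theorem. Since $f$ need not be a form, I would first pass to its homogenization $\bar{f}(\ux,Y)$, a form of degree $2d$ in the $n+1$ variables $X_1,\dots,X_n,Y$, whose expansion is $f_0Y^{2d}+\sum_{\alpha\in\Omega}f_{\alpha}\ux^{\alpha}Y^{2d-|\alpha|}+\sum_{i=1}^n f_{2d,i}X_i^{2d}$. By Proposition \ref{homodehomo}, $f$ is SOS if and only if $\bar{f}$ is SOS, so it suffices to show $\bar{f}$ is \SOBS.

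Next I would identify the index set of $\bar{f}$ with $\Delta=\Delta(f)$. Each $\alpha\in\Omega(f)$ lifts to the exponent $\tilde{\alpha}=(\alpha_1,\dots,\alpha_n,2d-|\alpha|)$ of $\bar{f}$, a genuine non-pure-power term carrying the same coefficient $f_{\alpha}$. The one point needing care is the parity bookkeeping hidden in the definition of $\Delta$: the lifted exponent $\tilde{\alpha}$ lies in $\Delta(\bar{f})$ precisely when $f_{\alpha}<0$ or some coordinate of $\tilde{\alpha}$ is odd, and the new coordinate $2d-|\alpha|$ can be odd only when $|\alpha|$ is odd, i.e. only when some $\alpha_i$ is already odd. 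Hence the extra $Y$-coordinate never produces new members, and $\alpha\mapsto\tilde{\alpha}$ is a bijection $\Delta(f)\to\Delta(\bar{f})$.

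With this bijection in hand, I would set $a_{\tilde{\alpha},j}=|f_{\alpha}|\,\tilde{\alpha}_j/(2d)$ for each $\tilde{\alpha}\in\Delta(\bar{f})$ and each $j=1,\dots,n+1$; these are nonnegative. Verifying condition (1) of Theorem \ref{SuffCnd} is the main computation: because $\sum_j \tilde{\alpha}_j=2d$, one obtains $a_{\tilde{\alpha}}^{\tilde{\alpha}}=|f_{\alpha}|^{2d}\prod_j(\tilde{\alpha}_j/(2d))^{\tilde{\alpha}_j}=f_{\alpha}^{2d}\,\tilde{\alpha}^{\tilde{\alpha}}/(2d)^{2d}$, which is exactly $(2d)^{2d}a_{\tilde{\alpha}}^{\tilde{\alpha}}=f_{\tilde{\alpha}}^{2d}\tilde{\alpha}^{\tilde{\alpha}}$. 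For condition (2), the pure-power coefficients of $\bar{f}$ are $f_{2d,i}$ (for $X_i^{2d}$, $i=1,\dots,n$) and $f_0$ (for $Y^{2d}$); summing the chosen entries over $\Delta(\bar{f})$ and transporting along the bijection, the $n$ inequalities $f_{2d,i}\ge\sum_{\alpha\in\Delta}|f_{\alpha}|\,\alpha_i/(2d)$ are precisely (L2), while the remaining $Y$-inequality $f_0\ge\sum_{\alpha\in\Delta}|f_{\alpha}|\,(2d-|\alpha|)/(2d)$ is precisely (L1). Theorem \ref{SuffCnd} then yields that $\bar{f}$ is \SOBS, hence SOS, and Proposition \ref{homodehomo} gives that $f$ is SOS.

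The only genuinely delicate step is the parity observation establishing $\Delta(\bar{f})=\{\tilde{\alpha}:\alpha\in\Delta\}$; everything else is the routine verification already packaged inside Theorem \ref{SuffCnd} and Corollary \ref{FKkeythm}, so I expect no serious obstacle. The array $a_{\tilde{\alpha},j}=|f_{\alpha}|\,\tilde{\alpha}_j/(2d)$ is simply the most natural feasible point for constraint (1)---distributing the required weight proportionally to the exponents---and it is engineered so that the column-sum conditions (2) collapse exactly to (L1) and (L2).
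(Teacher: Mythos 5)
Your proposal is correct and takes essentially the same route as the paper's own proof: homogenize $f$, apply Theorem \ref{SuffCnd} to $\bar{f}(\ux,Y)$ with the weights $a_{\alpha,i}=|f_{\alpha}|\frac{\alpha_i}{2d}$ and $a_{\alpha,Y}=|f_{\alpha}|\frac{2d-|\alpha|}{2d}$, check condition (1) by the same exponent computation, and note that (L1), (L2) are exactly condition (2). Your explicit parity argument identifying $\Delta(\bar{f})$ with $\{\tilde{\alpha}:\alpha\in\Delta(f)\}$ is a detail the paper leaves implicit (and your formula for $a_{\alpha,Y}$ even corrects a small typo, $|\alpha_i|$ for $|\alpha|$, in the paper's proof).
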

\begin{proof}
Apply Theorem \ref{SuffCnd} to the homogenization $\bar{f}(\ux,Y)$ of $f$, taking $a_{\alpha,i}=|f_{\alpha}|\frac{\alpha_i}{2d}$, $i=1,\ldots,n$ and
$a_{\alpha,Y}=|f_{\alpha}|\frac{2d-|\alpha_i|}{2d}$ for each $\alpha \in \Delta$.
For $\alpha\in\Delta$,
\[
\begin{array}{lll}
	(2d)^{2d}a_{\alpha}^{\alpha} & = & (2d)^{2d}\left(\frac{|f_{\alpha}|(2d-|\alpha|)}{2d}\right)^{2d-|\alpha|}\prod_{i=1}^n\left(\frac{|f_{\alpha}|\alpha_i}{2d}\right)^{\alpha_i}\\
	& = & (2d)^{2d}|f_{\alpha}|^{2d-|\alpha|}(2d-|\alpha|)^{2d-|\alpha|}|f_{\alpha}|^{|\alpha|}\alpha^{\alpha}(2d)^{-2d}\\
	& = & |f_{\alpha}|^{2d}\alpha^{\alpha}(2d-|\alpha|)^{2d-|\alpha|}.
\end{array}
\]
So, \ref{SuffCnd}(1) holds. (L1) and (L2) imply \ref{SuffCnd}(2), therefore, by Theorem \ref{SuffCnd}, $\bar{f}$ and hence $f$ is \SOBS.
\end{proof}
\begin{crl}See \cite[Theorem 4.3]{FK} and \cite[Theorem 2.3]{MGHMM}.\label{newfk} Suppose $f\in\rx$ is a form of degree $2d$ and
$$\min\limits_{i=1,\ldots,n}f_{2d,i} \ge \frac{1}{2d}\sum\limits_{\alpha\in\Delta}|f_{\alpha}|(\alpha^{\alpha})^{\frac{1}{2d}}.$$
Then $f$ is \SOBS.
\end{crl}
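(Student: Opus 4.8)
The plan is to apply Theorem \ref{SuffCnd} directly to $f$ (no homogenization is needed, since $f$ is already a form of degree $2d$), with a single symmetric choice of the auxiliary numbers $a_{\alpha,i}$. The crucial observation is that, because $f$ is a form of degree $2d$, every $\alpha \in \Delta$ satisfies $|\alpha| = 2d$. Hence in condition (1) of Theorem \ref{SuffCnd}, namely $(2d)^{2d} a_\alpha^\alpha = f_\alpha^{2d}\alpha^\alpha$, the total exponent $\sum_{i=1}^n \alpha_i$ appearing in the product $a_\alpha^\alpha = \prod_{i=1}^n a_{\alpha,i}^{\alpha_i}$ is exactly $2d$, independent of $\alpha$.

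Guided by this, I would set, for every $\alpha \in \Delta$ and every $i = 1,\ldots,n$,
\[
	a_{\alpha,i} := \frac{|f_\alpha|\,(\alpha^\alpha)^{1/2d}}{2d},
\]
a value independent of the index $i$; these numbers are clearly nonnegative. For condition (1), since $a_{\alpha,i}$ does not depend on $i$ and $|\alpha| = 2d$, the product collapses to $a_\alpha^\alpha = \left(\frac{|f_\alpha|(\alpha^\alpha)^{1/2d}}{2d}\right)^{2d}$, and multiplying by $(2d)^{2d}$ returns $|f_\alpha|^{2d}\alpha^\alpha = f_\alpha^{2d}\alpha^\alpha$ (the last equality holds because $2d$ is even). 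For condition (2), the column sum $\sum_{\alpha\in\Delta} a_{\alpha,i}$ equals $\frac{1}{2d}\sum_{\alpha\in\Delta}|f_\alpha|(\alpha^\alpha)^{1/2d}$ for every $i$, which is precisely the right-hand side of the hypothesis; since the hypothesis bounds $\min_j f_{2d,j}$ below by this quantity, we obtain $f_{2d,i} \ge \min_j f_{2d,j} \ge \sum_{\alpha\in\Delta} a_{\alpha,i}$ for each $i$. Both conditions of Theorem \ref{SuffCnd} thus hold, and $f$ is \SOBS.

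I expect no real obstacle here: the entire content is recognizing that the uniform assignment (independent of $i$), legitimate precisely because $|\alpha| = 2d$ collapses the weighted product into a single $2d$-th power, simultaneously forces the equality (1) and yields the clean symmetric bound in (2). The only point requiring a little care is that the hypothesis is phrased with $\min_i f_{2d,i}$ rather than with each $f_{2d,i}$ separately; this is exactly what the symmetric choice needs, since it produces the same column sum for every $i$.
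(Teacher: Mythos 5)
Your proposal is correct and is exactly the paper's proof: the paper applies Theorem \ref{SuffCnd} with the same uniform choice $a_{\alpha,i}=\frac{|f_{\alpha}|(\alpha^{\alpha})^{1/2d}}{2d}$ for all $i$, leaving the verification (which hinges on $|\alpha|=2d$ for every $\alpha\in\Delta$) to the reader. Your write-up simply fills in those details correctly.
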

\begin{proof}
Apply Theorem \ref{SuffCnd} with $a_{\alpha,i}=|f_{\alpha}|\frac{\alpha^{\alpha/2d}}{2d}$, $\forall\alpha\in\Delta$, $i=1\ldots,n$.
\end{proof}
\begin{crl}\label{fkimp}
Suppose $f$ is a form of degree $2d$, $f_{2d,i}>0$, $i=1,\dots,n$ and $$\sum_{\alpha\in\Delta}\frac{|f_{\alpha}|\alpha^{\alpha/2d}}{2d\prod_{i=1}^nf_{2d,i}^{\alpha_i/2d}}\leq 1.$$ Then $f$ is \SOBS.
\end{crl}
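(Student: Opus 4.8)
The plan is to deduce this directly from Theorem \ref{SuffCnd} by exhibiting an explicit array $(a_{\alpha,i})$ satisfying its two conditions. Since the hypothesis guarantees $f_{2d,i}>0$, it is natural to look for weights that are proportional to the $f_{2d,i}$: for each $\alpha\in\Delta$, I would set $a_{\alpha,i}=\lambda_\alpha f_{2d,i}$, where $\lambda_\alpha\ge0$ is a single scalar (depending only on $\alpha$) to be determined. These quantities are manifestly nonnegative, as required.

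First I would pin down $\lambda_\alpha$ so that condition (1) of Theorem \ref{SuffCnd} holds. Writing $a_\alpha^\alpha=\prod_{i=1}^n a_{\alpha,i}^{\alpha_i}$ and using the identity $|\alpha|=2d$, the factors of $\lambda_\alpha$ combine to give $a_\alpha^\alpha=\lambda_\alpha^{2d}\prod_{i=1}^n f_{2d,i}^{\alpha_i}$. Substituting this into the required equation $(2d)^{2d}a_\alpha^\alpha=f_\alpha^{2d}\alpha^\alpha$ and solving for $\lambda_\alpha$ (taking the positive $2d$-th root, which is legitimate since $2d$ is even, so $f_\alpha^{2d}=|f_\alpha|^{2d}$) yields exactly
\[
\lambda_\alpha=\frac{|f_\alpha|\,\alpha^{\alpha/2d}}{2d\prod_{i=1}^n f_{2d,i}^{\alpha_i/2d}},
\]
that is, $\lambda_\alpha$ is precisely the $\alpha$-th summand appearing in the hypothesis of the corollary.

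Next I would verify condition (2). With the above choice, $\sum_{\alpha\in\Delta}a_{\alpha,i}=f_{2d,i}\sum_{\alpha\in\Delta}\lambda_\alpha$ for each $i$, so the inequality $f_{2d,i}\ge\sum_{\alpha\in\Delta}a_{\alpha,i}$ is equivalent, after dividing through by $f_{2d,i}>0$, to $\sum_{\alpha\in\Delta}\lambda_\alpha\le 1$. But this is exactly the inequality assumed in the statement. Having checked both conditions of Theorem \ref{SuffCnd}, I conclude that $f$ is \SOBS.

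I do not anticipate a serious obstacle: the corollary is a direct specialization of Theorem \ref{SuffCnd} obtained by the correct normalization of the weights relative to $f_{2d,i}$. The only place demanding care is the exponent bookkeeping in the first step, where the relation $|\alpha|=2d$ is what collapses $\lambda_\alpha^{|\alpha|}$ to $\lambda_\alpha^{2d}$ and makes condition (1) solvable in closed form; correctly tracking the fractional exponents $\alpha_i/2d$ through the $2d$-th root is the one spot where an arithmetic slip could occur.
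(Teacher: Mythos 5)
Your proposal is correct and is essentially the paper's own proof: your choice $a_{\alpha,i}=\lambda_\alpha f_{2d,i}$ with $\lambda_\alpha=\frac{|f_\alpha|\alpha^{\alpha/2d}}{2d\prod_{j=1}^n f_{2d,j}^{\alpha_j/2d}}$ is exactly the array $a_{\alpha,i}=\frac{|f_{\alpha}|\alpha^{\alpha/2d}f_{2d,i}}{2d\prod_{j=1}^nf_{2d,j}^{\alpha_j/2d}}$ that the paper plugs into Theorem \ref{SuffCnd}. The verification of conditions (1) and (2), including the use of $|\alpha|=2d$ and the even root, matches what the paper leaves implicit.
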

\begin{proof}
Apply Theorem \ref{SuffCnd} with $a_{\alpha,i}=\frac{|f_{\alpha}|\alpha^{\alpha/2d}f_{2d,i}}{2d\prod_{j=1}^nf_{2d,j}^{\alpha_j/2d}}$.
\end{proof}
\begin{rem}
Corollary \ref{fkimp} is an improved version of Corollary \ref{newfk}. This requires some explanation. Suppose that $f_{2d,i} \ge \frac{1}{2d}\sum_{\alpha\in \Delta} |f_{\alpha}|\alpha^{\alpha/2d}$, $i=1,\dots,n$. Let $f_{2d,i_0} := \min\{ f_{2d,i} : i=1,\dots,n\}$. Then $$\prod_{i=1}^n f_{2d,i}^{\alpha_i/2d} \ge \prod_{i=1}^n f_{2d,i_0}^{\alpha_i/2d} = f_{2d,i_0},$$ and
\begin{align*}
\sum_{\alpha\in \Delta}\frac{|f_{\alpha}|\alpha^{\alpha/2d}}{2d\prod_{i=1}^nf_{2d,i}^{\alpha_i/2d}} =& \frac{1}{2d} \sum_{\alpha\in \Delta} \frac{|f_{\alpha}|\alpha^{\alpha/2d}}{f_{2d,i_0}} \frac{f_{2d,i_0}}{\prod_{i=1}^n f_{2d,i}^{\alpha_i/2d}} \\ \le& \frac{1}{2d}\sum_{\alpha\in \Delta} \frac{|f_{\alpha}|\alpha^{\alpha/2d}}{f_{2d,i_0}}\le 1.
\end{align*}
\end{rem}
We note yet another sufficient condition for SOS-ness.
\begin{crl}\label{newcrt}
Let $f\in\rx$ be a form of degree $2d$. If
\[
	f_{2d,i}\ge \sum_{\alpha\in\Delta, \alpha_i\ne 0}\alpha_i\left(\frac{|f_{\alpha}|}{2d}\right)^{2d/\alpha_in_{\alpha}}, \ =1,\dots,n
\]
then $f$ is \SOBS. Here $n_{\alpha}:=|\{i:\alpha_i\neq0\}|$.
\end{crl}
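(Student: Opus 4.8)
The plan is to apply Theorem \ref{SuffCnd} directly, with a choice of weights $a_{\alpha,i}$ read off from the right-hand side of the hypothesis. Specifically, for $\alpha\in\Delta$ and $i=1,\dots,n$ I would set $a_{\alpha,i}=\alpha_i\left(\frac{|f_{\alpha}|}{2d}\right)^{2d/(\alpha_i n_{\alpha})}$ whenever $\alpha_i\ne0$, and $a_{\alpha,i}=0$ whenever $\alpha_i=0$. These are visibly nonnegative, and with this choice condition \ref{SuffCnd}(2) reads $f_{2d,i}\ge\sum_{\alpha\in\Delta}a_{\alpha,i}=\sum_{\alpha\in\Delta,\,\alpha_i\ne0}a_{\alpha,i}$, which is exactly the stated inequality. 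So the only thing left to check is condition \ref{SuffCnd}(1).

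Next I would verify \ref{SuffCnd}(1), namely $(2d)^{2d}a_{\alpha}^{\alpha}=f_{\alpha}^{2d}\alpha^{\alpha}$, for each fixed $\alpha\in\Delta$. Expanding $a_{\alpha}^{\alpha}=\prod_{i=1}^n a_{\alpha,i}^{\alpha_i}=\prod_{i:\alpha_i\ne0}a_{\alpha,i}^{\alpha_i}$ and substituting, each nonzero factor contributes $\alpha_i^{\alpha_i}\left(\frac{|f_{\alpha}|}{2d}\right)^{2d/n_{\alpha}}$, because the exponent $\alpha_i\cdot\frac{2d}{\alpha_i n_{\alpha}}=\frac{2d}{n_{\alpha}}$ no longer depends on $i$. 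The key point is that there are exactly $n_{\alpha}$ indices with $\alpha_i\ne0$, so the powers of $\frac{|f_{\alpha}|}{2d}$ multiply to $\left(\frac{|f_{\alpha}|}{2d}\right)^{2d}$, while $\prod_{i:\alpha_i\ne0}\alpha_i^{\alpha_i}=\alpha^{\alpha}$ by the convention $0^0=1$. Hence $a_{\alpha}^{\alpha}=\alpha^{\alpha}\left(\frac{|f_{\alpha}|}{2d}\right)^{2d}$, and multiplying by $(2d)^{2d}$ and using $|f_{\alpha}|^{2d}=f_{\alpha}^{2d}$ gives \ref{SuffCnd}(1). Theorem \ref{SuffCnd} then yields that $f$ is \SOBS.

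I do not expect a genuine obstacle here: once the weights are chosen to match condition \ref{SuffCnd}(2) on the nose, verifying \ref{SuffCnd}(1) is a bookkeeping exercise. The one subtle point worth stating carefully is the cancellation of the per-variable exponent, $\alpha_i\cdot\frac{2d}{\alpha_i n_{\alpha}}=\frac{2d}{n_{\alpha}}$, together with the fact that this common exponent is collected over exactly $n_{\alpha}$ terms; this is precisely what forces the $\frac{|f_{\alpha}|}{2d}$ contribution to reassemble into the full $2d$-th power demanded by \ref{SuffCnd}(1). Care should also be taken with the $0^0=1$ convention so that the vanishing coordinates of $\alpha$ are correctly ignored in both $a_{\alpha}^{\alpha}$ and $\alpha^{\alpha}$.
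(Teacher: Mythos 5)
Your proof is correct and takes exactly the same route as the paper: the paper's proof consists of applying Theorem \ref{SuffCnd} with precisely your choice of $a_{\alpha,i}$, leaving the verification unstated. Your explicit check of condition (1) --- the cancellation $\alpha_i\cdot\frac{2d}{\alpha_i n_{\alpha}}=\frac{2d}{n_{\alpha}}$ collected over the $n_{\alpha}$ nonzero coordinates, together with the $0^0=1$ convention and $|f_{\alpha}|^{2d}=f_{\alpha}^{2d}$ --- is just the bookkeeping the paper omits.
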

\begin{proof}
Apply Theorem \ref{SuffCnd} with
\[
	 a_{\alpha,i}=\left\lbrace\begin{array}{ll}\alpha_i\left(\frac{|f_{\alpha}|}{2d}\right)^{2d/\alpha_in_{\alpha}} & \textrm{if } \alpha_i\neq 0\\
		0 & \textrm{if } \alpha_i=0.
	\end{array}\right.
\]
\end{proof}
The following example shows that the above corollaries are not as strong, either individually or collectively, as Theorem \ref{SuffCnd} itself.
\begin{exm}\label{deg6}
Let $f(X,Y,Z)=X^6+Y^6+Z^6-5X-4Y-Z+8$.
Corollary \ref{JLsos} does not apply to $f$, actually (L1) fails.
Also, Corollaries \ref{newfk}, \ref{fkimp} and \ref{newcrt} do not apply to $\bar{f}$, the homogenization of $f$.
We try to apply Theorem \ref{SuffCnd}. Let $\alpha_1=(1,0,0,5)$, $\alpha_2=(0,1,0,5)$ and  $\alpha_3=(0,0,1,5)$, then
$\Delta=\{\alpha_1, \alpha_2, \alpha_3\}$. Denote $a_{\alpha_i,j}$ by $a_{ij}$, we have to find positive reals
$a_{11}, a_{22}, a_{33}, a_{14}, a_{24}, a_{34}$ such that the followings hold:
\[
\begin{array}{ll}
	6^6a_{11}a_{14}^5 = 5^65^5, & 1\ge a_{11},\\
	6^6a_{22}a_{24}^5 = 4^65^5, & 1\ge a_{22},\\
	6^6a_{33}a_{34}^5 = 5^5, & 1\ge a_{33},\\
	8\ge a_{14}+a_{24}+a_{34}.&
\end{array}
\]
Take $a_{11}=a_{22}=a_{33}=1$ and solve equations on above set of conditions, we get $a_{14}+a_{24}+a_{34}\approx 7.674 < 8$. This
implies that $\bar{f}$ and hence $f$ is \SOBS.
\end{exm}

\section{Application to global optimization}\label{AtGP}
%
Let $f\in \rx$ be a non-constant polynomial of degree $2d$. Recall that $f_{sos}$ denotes the supremum of all real numbers $r$ such that $f-r\in\sos$, $f_*$ denotes the infimum of the set $\{ f(\underline{a}) : \underline{a} \in \mathbb{R}^n\}$, and $f_{sos}\leq f_*$.

Suppose $\underline{f}$ denotes the array of coefficients of non-constant terms of $f$ and $f_0$ denotes the constant term of $f$.
Suppose $\Phi(\underline{f},f_0)$ is a formula in terms of coefficients of $f$ such that
$\Phi(\underline{f},f_0)$ implies $f$ is SOS.
For such a criterion $\Phi$, we have
\[
\forall r~(\Phi(\underline{f},f_0-r)\rightarrow ~r\leq f_{sos}),
\]
so $f_{\Phi} :=\sup\{r\in\mathbb{R} : \Phi(\underline{f},f_0-r)\}$ is a lower bound for $f_{sos}$ and, consequently, for $f_*$.
In this section we develop this idea, using Theorem \ref{SuffCnd}, to find a new lower bound for $f$.
\begin{thm}\label{fgp}
Let $f$ be a non-constant polynomial of degree $2d$ and $r\in\reals$. Suppose there exist nonnegative real numbers $a_{\alpha,i}$, $\alpha\in\Delta$,
$i=1,\ldots,n$, $a_{\alpha,i} = 0$ iff $\alpha_i=0$, such that
\begin{enumerate}
	\item{$(2d)^{2d}a_{\alpha}^{\alpha}=|f_{\alpha}|^{2d}\alpha^{\alpha}$ for each $\alpha\in\Delta$ such that $|\alpha|=2d$,}
	\item{$f_{2d,i}\ge\sum_{\alpha\in\Delta}a_{\alpha,i}$ for $i=1,\ldots,n$, and}
	 \item{$f_0-r\ge\sum_{\alpha\in\Delta^{<2d}}(2d-|\alpha|)\left[\frac{|f_{\alpha}|^{2d}\alpha^{\alpha}}{(2d)^{2d}a_{\alpha}^{\alpha}}\right]^{\frac{1}{2d-|\alpha|}}$.}
\end{enumerate}
Then $f-r$ is \SOBS.
Here $\Delta^{<2d}:=\{\alpha\in\Delta:|\alpha|<2d\}$.
\end{thm}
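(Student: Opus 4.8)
The plan is to reduce the statement to Theorem~\ref{SuffCnd} applied to the homogenization of $g := f - r$, exactly in the spirit of the proof of Corollary~\ref{JLsos}. Since $f$ is non-constant of degree $2d$, so is $g$, and subtracting the constant $r$ alters only the constant term; thus $g$ has constant term $g_{\underline{0}} = f_0 - r$, the same coefficients $g_\alpha = f_\alpha$ for $\alpha \ne \underline{0}$, and $\Delta(g) = \Delta(f) = \Delta$. I write $\bar g(\ux, Y) = Y^{2d} g(X_1/Y, \ldots, X_n/Y)$, a form of degree $2d$ in the $n+1$ variables $X_1, \ldots, X_n, Y$. If I can show $\bar g$ is \SOBS, then setting $Y = 1$ sends each binomial square to a binomial square in $\ux$ alone, so $g = f - r$ is \SOBS, as desired.

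It then remains to apply Theorem~\ref{SuffCnd} to $\bar g$. First I identify $\Delta(\bar g)$: the monomial $f_\alpha \ux^\alpha Y^{2d - |\alpha|}$ fails to be a square precisely when $f_\alpha \ux^\alpha$ does, since if all $\alpha_i$ are even then $|\alpha|$ is even and hence $2d - |\alpha|$ is even as well, so the homogenizing variable introduces no new odd exponent. Thus $\Delta(\bar g)$ is in bijection with $\Delta$ via $\alpha \mapsto \bar\alpha := (\alpha_1, \ldots, \alpha_n, 2d - |\alpha|)$. I choose the coefficient array for Theorem~\ref{SuffCnd} by keeping the given numbers in the first $n$ slots, $b_{\bar\alpha, i} := a_{\alpha, i}$ for $i = 1, \ldots, n$, and defining the slot belonging to $Y$ so that condition~(1) of Theorem~\ref{SuffCnd} is forced. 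Concretely, solving
\[
(2d)^{2d} b_{\bar\alpha}^{\bar\alpha} = (2d)^{2d} a_\alpha^\alpha\, b_{\bar\alpha, n+1}^{\,2d - |\alpha|} = |f_\alpha|^{2d}\, \bar\alpha^{\bar\alpha} = |f_\alpha|^{2d}\,\alpha^\alpha\,(2d - |\alpha|)^{2d - |\alpha|}
\]
for $b_{\bar\alpha, n+1}$ gives $b_{\bar\alpha, n+1} = 0$ when $|\alpha| = 2d$ and $b_{\bar\alpha, n+1} = (2d - |\alpha|)\bigl[|f_\alpha|^{2d}\alpha^\alpha / ((2d)^{2d} a_\alpha^\alpha)\bigr]^{1/(2d - |\alpha|)}$ when $|\alpha| < 2d$. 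The hypothesis that $a_{\alpha, i} = 0$ iff $\alpha_i = 0$ guarantees $a_\alpha^\alpha > 0$, so this is well defined and is positive exactly when $2d - |\alpha| \ne 0$, as the Remark following Theorem~\ref{SuffCnd} requires.

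With these choices the verification is immediate. Condition~(1) of Theorem~\ref{SuffCnd} holds for $\bar\alpha$ with $|\alpha| = 2d$ by hypothesis~(1) (the $Y$-exponent being then $0$), and for $|\alpha| < 2d$ it holds by the very definition of $b_{\bar\alpha, n+1}$. Condition~(2) in the slots $i = 1, \ldots, n$ reads $f_{2d, i} \ge \sum_{\alpha \in \Delta} a_{\alpha, i}$, which is hypothesis~(2); and since the coefficient of $Y^{2d}$ in $\bar g$ is $f_0 - r$, condition~(2) in the $Y$-slot reads
\[
f_0 - r \ge \sum_{\alpha \in \Delta} b_{\bar\alpha, n+1} = \sum_{\alpha \in \Delta^{<2d}} (2d - |\alpha|)\left[\frac{|f_\alpha|^{2d}\alpha^\alpha}{(2d)^{2d} a_\alpha^\alpha}\right]^{1/(2d - |\alpha|)},
\]
which is exactly hypothesis~(3). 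Hence Theorem~\ref{SuffCnd} applies, $\bar g$ is \SOBS, and the proof concludes as above.

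I do not expect a serious obstacle: the entire content is bookkeeping. The one conceptual point worth flagging is the design of the $Y$-slot. One does not impose condition~(1) for $|\alpha| < 2d$ as an extra constraint, but rather exploits the freedom in the homogenizing variable to \emph{solve} for $b_{\bar\alpha, n+1}$; condition~(3) is then precisely the statement that the resulting $Y$-row of condition~(2) holds. The only places demanding attention are checking that squares of $g$ homogenize to squares (so $\Delta(\bar g)$ gains no genuinely new elements) and that the degenerate exponents behave correctly under the convention $0^0 = 1$.
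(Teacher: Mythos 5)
Your proof is correct and takes essentially the same route as the paper's own: both apply Theorem~\ref{SuffCnd} to the homogenization of $f-r$, keep the given $a_{\alpha,i}$ in the $X_i$-slots, and solve condition~(1) of Theorem~\ref{SuffCnd} for the $Y$-slot value (zero when $|\alpha|=2d$), so that the $Y$-row of condition~(2) becomes exactly hypothesis~(3). The only difference is that you make explicit the bookkeeping the paper leaves implicit, namely the bijection $\alpha\mapsto\bar\alpha$ between $\Delta$ and $\Delta(\overline{f-r})$ and the role of the convention $0^0=1$.
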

\begin{proof}
Apply Theorem \ref{SuffCnd} to $g:=\overline{f-r}$, the homogenization of $f-r$. Since
$f=f_0+\sum_{i=1}^nf_{2d,i}X_i^{2d}+\sum_{\alpha\in\Omega}f_{\alpha}\ux^{\alpha}$, it follows that
$g=(f_0-r)Y^{2d}+\sum_{i=1}^nX_i^{2d}+\sum_{\alpha\in\Omega}f_{\alpha}\ux^{\alpha}Y^{2d-|\alpha|}$. We know $f-r$ is \SOBS~ if and only
if $g$ is \SOBS. The sufficient condition for $g$ to be \SOBS~ given by Theorem \ref{SuffCnd} is that there exist non-negative real numbers
$a_{\alpha,i}$ and $a_{\alpha,Y}$, $a_{\alpha,i} = 0$ iff $\alpha_i=0$, $a_{\alpha,Y}=0$ iff $|\alpha|=2d$ such that\\
\indent$(1)'$ $\forall$ $\alpha \in \Delta$ $(2d)^{2d}a_{\alpha}^{\alpha}a_{\alpha,Y}^{2d-|\alpha|}=|f_{\alpha}|^{2d}\alpha^{\alpha}(2d-|\alpha|)^{2d-|\alpha|}$,
and\\
\indent$(2)'$ $f_{2d,i}\ge\sum_{\alpha\in\Delta}a_{\alpha,i}$, $i=1,\ldots,n$ and $f_0-r\ge\sum_{\alpha\in\Delta}a_{\alpha,Y}$.\\
Solving $(1)'$ for $a_{\alpha,Y}$ yields
\[
a_{\alpha,Y}=(2d-|\alpha|)\left[\frac{|f_{\alpha}|^{2d}\alpha^{\alpha}}{(2d)^{2d}a_{\alpha}^{\alpha}}\right]^{\frac{1}{2d-|\alpha|}},
\]
if $|\alpha|<2d$. Take $a_{\alpha,Y}=0$ if $|\alpha|=2d$. Conversely, defining $a_{\alpha,Y}$ in this way, for each $\alpha \in \Delta$, it is easy to see that (1), (2), and (3) imply $(1)'$ and $(2)'$.
\end{proof}
\begin{dfn}
For a non-constant polynomial $f$ of degree $2d$ we define
\begin{align*}
f_{gp}:=\sup\{ &r\in\reals: \exists a_{\alpha,i}\in\reals^{\ge0},~\alpha\in\Delta, i=1,\dots,n, \ a_{\alpha,i}=0 \textrm{ iff } \alpha_i =0 \\ & \textrm{ satisfying conditions (1), (2) and (3) of Theorem \ref{fgp}}\}.
\end{align*}

\end{dfn}
It follows, as a consequence of Theorem \ref{fgp}, that $f_{gp}\le f_{sos}$.

\begin{exm} Let $f(X,Y) = X^4+Y^4-X^2Y^2+X+Y$. Here, $\Delta = \{ \alpha_1,\alpha_2,\alpha_3\}$, where $\alpha_1=(1,0)$, $\alpha_2=(0,1)$ and $\alpha_3=(2,2)$. We are looking for non-negative reals $a_{i,j}$, $i=1,2,3$, $j=1,2$ satisfying $a_{11}+a_{21}+a_{31}\le1$, $a_{12}+a_{22}+a_{32}\le 1$, $a_{31}a_{32} = \frac{1}{4}$. Taking $a_{11}=a_{22}=a_{31}=a_{32}= \frac{1}{2}$, $a_{12}=a_{21}=0$, we see that $f_{gp} \ge -\frac{3}{2^{4/3}}$. Taking $X= Y = -\frac{1}{2^{1/3}}$ we see that $f_* \le f(-\frac{1}{2^{1/3}}, -\frac{1}{2^{1/3}}) = -\frac{3}{2^{4/3}}$. Since $f_{gp} \le f_{sos}\le f_*$, it follows that $f_{gp}=f_{sos}=f_*= -\frac{3}{2^{4/3}}$.
\end{exm}
\begin{rem}\label{singleterm}
If $|\Omega|=1$ then $f_*=f_{sos}=f_{gp}$.

\begin{proof} Say $\Omega = \{ \alpha\}$, so $f = \sum_{i=0}^n f_{2d,i}X_i^{2d}+f_0+f_{\alpha}\underline{X}^{\alpha}$. We know $f_{gp} \le f_{sos}\le f_*$, so it suffices to show that, for each real number $r$, $f_*\ge r$ $\Rightarrow$ $f_{gp}\ge r$. Fix $r$ and assume $f_*\ge r$. We want to show $f_{gp}\ge r$, i.e., that $r$ satisfies the constrains of Theorem \ref{fgp}. Let $g$ denote the homogenization of $f-r$, i.e., $g = \sum_{i=1}^n f_{2d,i}X_i^{2d}+(f_0-r)Y^{2d}+f_{\alpha}\underline{X}^{\alpha}Y^{2d-|\alpha|}$. Thus $g$ is PSD. This implies, in particular, that $f_{2d,i}\ge 0$, $i=1,\dots,n$ and $f_0\ge r$. There are two cases to consider.

Case 1. Suppose $f_{\alpha}>0$ and all $\alpha_i$ are even. Then $\alpha\notin \Delta$, so $\Delta = \emptyset$. In this case $r$ satisfies trivially the constraints of Theorem \ref{fgp}, so $f_{gp}\ge r$.

Case 2. Suppose either $f_{\alpha}<0$ or not all of the $\alpha_i$ are even. Then $\alpha \in \Delta$, i.e., $\Delta = \Omega = \{ \alpha\}$. In this case, applying Corollary \ref{FKkeythm}, we deduce that
\begin{equation}\label{*}
f_{\alpha}^{2d}\alpha^{\alpha}(2d-|\alpha|)^{2d-|\alpha|} \le (2d)^{2d} \prod_{i=1}^n f_{2d,i}^{\alpha_i}(f_0-r)^{2d-|\alpha|}.
\end{equation}
There are two subcases to consider. If $|\alpha|< 2d$ then $r$ satisfies the constraints of Theorem \ref{fgp}, taking
\[
	a_{\alpha,i}=\left\lbrace\begin{array}{ll}
		f_{2d,i} & \textrm{if } \alpha_i\neq 0\\
		0 & \textrm{if } \alpha_i=0.
	\end{array}\right.
\]
If $|\alpha|=2d$ then (\ref{*}) reduces to $f_{\alpha}^{2d}\alpha^{\alpha} \le (2d)^{2d} \prod_{i=1}^n f_{2d,i}^{\alpha_i}$. In this case, $r$ satisfies the constraints of Theorem \ref{fgp}, taking
\[
	a_{\alpha,i}=\left\lbrace\begin{array}{ll}
		sf_{2d,i} & \textrm{if } \alpha_i\neq 0\\
		0 & \textrm{if } \alpha_i=0.
	\end{array}\right.
\]
where $$s= \left[\frac{|f_{\alpha}|^{2d}\alpha^{\alpha}}{(2d)^{2d}\prod_{i=1}^n f_{2d,i}^{\alpha_i}}\right]^{\frac{1}{|\alpha|}}.$$
\end{proof}
\end{rem}

If $f_{2d,i}>0$, $i=1,\ldots,n$ then computation of $f_{gp}$ is a geometric programming problem. We explain this now.
\begin{dfn}(geometric program)

(1) A function $f:\reals_{>0}^n\rightarrow\reals$ of the form
\[
	\phi(\underline{x})=cx_1^{a_1}\cdots x_n^{a_n},
\]
where $c>0$, $a_i\in\reals$ and $\underline{x}=(x_1,\ldots,x_n)$ is called a \textit{monomial function}. A sum of monomial functions, i.e., a function of the form
\[
	\phi(\underline{x})=\sum_{i=1}^k c_i x_1^{a_{1i}}\cdots x_n^{a_{ni}}
\]
where $c_i>0$ for $i=1,\dots,k$, is called a \textit{posynomial function}.\\

(2) An optimization problem of the form
\[
	\left\lbrace
	\begin{array}{ll}
		\textrm{Minimize} & \phi_0(\underline{x})  \\
		\textrm{Subject to} & \phi_i(\underline{x})\leq 1, \ i=1,\ldots,m \text{ and } \  \psi_i(\underline{x})=1, \ i=1,\ldots,p
	\end{array}
	\right.
\]
where $\phi_0,\ldots,\phi_m$ are posynomials and $\psi_1,\ldots,\psi_p$ are monomial functions, is called a \textit{geometric program}.
\end{dfn}
See \cite[Section 4.5]{SB-LV} or \cite[Section 5.3]{NLPpsu} for detail on geometric programs.
\begin{crl}\label{fgprog}
Let $f$ be a non-constant polynomial of degree $2d$ with $f_{2d,i}>0$, $i=1,\dots,n$. Then $f_{gp} = f_0-m^*$ where $m^*$ is the output of the geometric program
\[
\left\lbrace
\begin{array}{ll}
	\textrm{Minimize} & \sum_{\alpha\in\Delta^{<2d}}(2d-|\alpha|)\left[\left(\frac{f_{\alpha}}{2d}\right)^{2d}\alpha^{\alpha}a_{\alpha}^{-\alpha}\right]^{\frac{1}{2d-|\alpha|}} \\
	\textrm{Subject to} & \sum_{\alpha\in\Delta}\frac{a_{\alpha,i}}{ f_{2d,i}} \le 1, \  i=1,\cdots,n \ \text{ and } \ \ \frac{(2d)^{2d}a_{\alpha}^{\alpha}}{|f_{\alpha}|^{2d}\alpha^{\alpha}} =1, \ \alpha \in \Delta, \ |\alpha|=2d.\\
\end{array}
\right.
\]
The variables in the program are the $a_{\alpha,i}$, $\alpha\in \Delta$, $i=1,\dots,n$, $\alpha_i\ne 0$, the understanding being that $a_{\alpha,i} =0$ iff $\alpha_i=0$.
\end{crl}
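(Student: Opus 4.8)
The plan is to show that the geometric program is nothing more than a reformulation of the optimization defining $f_{gp}$, so that the identity $f_{gp}=f_0-m^*$ drops out by unwinding the two definitions. First I would check that, because $f_{2d,i}>0$, condition (2) of Theorem \ref{fgp} is equivalent to the inequality constraints $\sum_{\alpha\in\Delta}\frac{a_{\alpha,i}}{f_{2d,i}}\le 1$ of the program (just divide by $f_{2d,i}$), and that condition (1) is equivalent to the equality constraints $\frac{(2d)^{2d}a_{\alpha}^{\alpha}}{|f_{\alpha}|^{2d}\alpha^{\alpha}}=1$ for $\alpha\in\Delta$, $|\alpha|=2d$ (here $|f_{\alpha}|^{2d}\alpha^{\alpha}>0$, since $f_{\alpha}\ne 0$ and $\alpha^{\alpha}>0$, so dividing is legitimate). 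Thus an array $a=(a_{\alpha,i})$ is feasible for the program precisely when it satisfies conditions (1) and (2) of Theorem \ref{fgp}. The stipulation that the program variables are the $a_{\alpha,i}$ with $\alpha_i\ne 0$, ranging over positive reals, together with the convention $a_{\alpha,i}=0$ iff $\alpha_i=0$, matches exactly the constraint on the arrays appearing in the definition of $f_{gp}$.

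Next I would identify the objective with the right-hand side of condition (3). Since $2d$ is even we have $f_{\alpha}^{2d}=|f_{\alpha}|^{2d}$, so $\left(\frac{f_{\alpha}}{2d}\right)^{2d}\alpha^{\alpha}a_{\alpha}^{-\alpha}=\frac{|f_{\alpha}|^{2d}\alpha^{\alpha}}{(2d)^{2d}a_{\alpha}^{\alpha}}$, and hence the objective
\[
\Phi(a):=\sum_{\alpha\in\Delta^{<2d}}(2d-|\alpha|)\left[\left(\frac{f_{\alpha}}{2d}\right)^{2d}\alpha^{\alpha}a_{\alpha}^{-\alpha}\right]^{\frac{1}{2d-|\alpha|}}
\]
is precisely the quantity appearing on the right of condition (3). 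In passing, this also exhibits the program as a genuine geometric program: $\Phi$ is a posynomial in the $a_{\alpha,i}$ (each summand is a positive constant times a monomial), each inequality constraint has the form \textit{posynomial} $\le 1$, and each equality constraint has the form \textit{monomial} $=1$.

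Having made these identifications, the equivalence of the two optimization problems is immediate. For a fixed feasible array $a$, condition (3) reads $f_0-r\ge\Phi(a)$, i.e.\ $r\le f_0-\Phi(a)$, so the set of $r$ admissible with this particular $a$ is exactly $(-\infty,f_0-\Phi(a)]$. Taking the supremum over $r$ and then over $a$, I obtain
\[
f_{gp}=\sup_{a\text{ feasible}}\left(f_0-\Phi(a)\right)=f_0-\inf_{a\text{ feasible}}\Phi(a)=f_0-m^*,
\]
the middle equality being the elementary fact $\sup(c-S)=c-\inf S$. This chain holds in $\reals\cup\{\pm\infty\}$, so it also covers the degenerate situations: if no feasible $a$ exists then both sides equal $-\infty$, while if $\Delta^{<2d}=\emptyset$ then $\Phi\equiv 0$ and $f_{gp}=f_0$.

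I do not expect any serious obstacle, since the content is entirely definitional unwinding. The only points demanding care are the bookkeeping that matches conditions (1) and (2) to the equality and inequality constraints — in particular the evenness and nonvanishing facts that keep every coefficient positive and make $\Phi$ a bona fide posynomial — and the clean separation of the supremum over $r$ from the minimization over $a$, which is exactly what licenses the $\sup$–$\inf$ interchange.
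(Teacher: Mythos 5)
Your proposal is correct and follows essentially the same route as the paper: the paper's proof simply declares $f_{gp}=f_0-m^*$ ``immediate from the definition'' and then, as you do, verifies that the objective and inequality constraints are posynomials and the equality constraints are monomial functions, so the problem is a genuine geometric program. Your write-up merely fills in the definitional unwinding (matching conditions (1)--(3) of Theorem \ref{fgp} to the constraints and objective, using $f_\alpha^{2d}=|f_\alpha|^{2d}$, and the $\sup$--$\inf$ exchange) that the paper leaves implicit.
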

\begin{proof}
$f_{gp} = f_0-m^*$ is immediate from the definition of $f_{gp}$. Observe that $$\phi_0(a):=\sum_{\alpha\in\Delta, |\alpha|<2d}(2d-|\alpha|)\left[\left(\frac{f_{\alpha}}{2d}\right)^{2d}\alpha^{\alpha}a_{\alpha}^{-\alpha}\right]^{\frac{1}{2d-|\alpha|}}$$ and $\phi_i(a):= \sum_{\alpha\in\Delta}\frac{a_{\alpha,i}}{ f_{2d,i}}, \ i=1,\dots,n$ are posynomials in the variables $a_{\alpha,i}$, and $\psi_{\alpha}(a) := \frac{(2d)^{2d}a_{\alpha}^{\alpha}}{|f_{\alpha}|^{2d}\alpha^{\alpha}}$, $\alpha \in \Delta$, $|\alpha|=2d$ are monomial functions in the variables $a_{\alpha,i}$.
\end{proof}
\noindent
\bf Addendum: \rm If either $f_{2d,i}<0$ for some $i$ or $f_{2d,i}=0$ and $\alpha_i \ne 0$ for some $i$ and some $\alpha$ then $f_{gp} = -\infty$. In all remaining cases, after deleting the columns of the array $(a_{\alpha,i})$ corresponding to the indices $i$ such that $f_{2d,i}=0$, we are reduced to the case where $f_{2d,i}>0$ for all $i$, i.e., we can apply geometric programming to compute $f_{gp}$.

\medskip

A special case occurs when $f_{2d,i}>0$, for $i=1,\dots,n$ and $\{\alpha\in\Delta~:~|\alpha|=2d\}=\emptyset$. In this case, the equality constraints in the computation of $m^*$ are vacuous
and the feasibility set is always non-empty, so $f_{gp} \ne -\infty$.
\begin{crl}\label{fgpinterior}
If $|\alpha|<2d$ for each $\alpha\in\Delta$ and $f_{2d,i}>0$ for $i=1,\dots,n$, then $f_{gp}\ne -\infty$ and $f_{gp}=f_0-m^*$ where
$m^*$ is the output of the geometric program
\[
\left\lbrace
\begin{array}{ll}
	\textrm{Minimize} & \sum_{\alpha\in\Delta}(2d-|\alpha|)\left[\left(\frac{f_{\alpha}}{2d}\right)^{2d}\alpha^{\alpha}a_{\alpha}^{-\alpha}\right]^{\frac{1}{2d-|\alpha|}} \\
	\textrm{Subject to} & \sum_{\alpha\in\Delta}a_{\alpha,i}\le f_{2d,i}, \quad i=1,\cdots,n.\\
\end{array}
\right.
\]
\end{crl}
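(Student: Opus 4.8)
The plan is to obtain this corollary as the specialization of Corollary \ref{fgprog} to the situation in which every exponent occurring in $\Delta$ has total degree strictly less than $2d$, and then to establish separately the one genuinely new assertion, namely that $f_{gp}\ne-\infty$. First I would record the effect of the hypothesis on the data of Corollary \ref{fgprog}: since $|\alpha|<2d$ for every $\alpha\in\Delta$, we have $\Delta^{<2d}=\Delta$ and $\{\alpha\in\Delta:|\alpha|=2d\}=\emptyset$. Hence there are no monomial equality constraints $\psi_\alpha(a)=1$ to impose, and the posynomial objective $\phi_0$ runs over all of $\Delta$. Because $f_{2d,i}>0$ for each $i$, I would then clear the positive denominator in each inequality constraint $\sum_{\alpha\in\Delta}a_{\alpha,i}/f_{2d,i}\le 1$, rewriting it in the equivalent form $\sum_{\alpha\in\Delta}a_{\alpha,i}\le f_{2d,i}$. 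This is precisely the geometric program in the statement, so the identity $f_{gp}=f_0-m^*$ is inherited verbatim from Corollary \ref{fgprog}.

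It then remains to show $f_{gp}\ne-\infty$, equivalently $m^*<+\infty$, i.e.\ that the feasible set is non-empty. For this I would exhibit an explicit feasible point. For each $i$ put $k_i:=|\{\alpha\in\Delta:\alpha_i\ne 0\}|$, and set $a_{\alpha,i}:=f_{2d,i}/k_i$ whenever $\alpha_i\ne 0$ (so that $k_i\ge 1$ and the quotient makes sense) and $a_{\alpha,i}:=0$ whenever $\alpha_i=0$. Since $f_{2d,i}>0$, these values are positive exactly when $\alpha_i\ne 0$, so the convention $a_{\alpha,i}=0\Leftrightarrow\alpha_i=0$ is respected, and $\sum_{\alpha\in\Delta}a_{\alpha,i}=f_{2d,i}$ for every $i$, satisfying the constraints with equality. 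Thus the feasible set is non-empty. Each summand of $\phi_0$ is a product of positive quantities---here I use $2d-|\alpha|>0$, that $\alpha^\alpha>0$, that $f_\alpha\ne 0$ for $\alpha\in\Delta$, and that $a_\alpha^{-\alpha}>0$ at any feasible point---so $\phi_0\ge 0$ throughout; combined with the non-emptiness of the feasible set, this forces $0\le m^*<\infty$ and hence $f_{gp}=f_0-m^*\ne-\infty$.

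The main obstacle is bookkeeping rather than analysis. One must verify that the boundary convention $a_{\alpha,i}=0\Leftrightarrow\alpha_i=0$ is compatible with the chosen feasible point---which is why I normalize by $k_i$ instead of assigning arbitrary positive values---and that clearing the factor $f_{2d,i}$ preserves the constraint, both of which depend on the standing hypothesis $f_{2d,i}>0$. No analytic subtlety appears, because the assumption $|\alpha|<2d$ removes the monomial equality constraints, and these are the only feature of Corollary \ref{fgprog} capable of rendering the feasible set empty; their absence is exactly what guarantees $f_{gp}\ne-\infty$.
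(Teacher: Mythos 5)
Your proof is correct and takes essentially the same route as the paper: the paper obtains Corollary \ref{fgpinterior} as ``immediate from Corollary \ref{fgprog},'' having already noted (in the addendum preceding the statement) that the hypothesis $|\alpha|<2d$ for all $\alpha\in\Delta$ makes the equality constraints vacuous, so the feasible set is non-empty and $f_{gp}\ne-\infty$. Your explicit feasible point $a_{\alpha,i}=f_{2d,i}/k_i$ simply makes concrete the non-emptiness that the paper asserts without exhibiting a point.
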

\begin{proof} Immediate from Corollary \ref{fgprog}.
\end{proof}

\begin{exm}
(1) Let $f$ be the polynomial of Example \ref{deg6}. Then $f_{gp}=f_{sos}=f_*\approx 0.3265$.\\
(2) For $g(X,Y,Z)=X^6+Y^6+Z^6+X^2YZ^2-X^4-Y^4-Z^4-YZ^3-XY^2+2$, $g_*\approx0.667$, and $g_{gp}=g_{sos}\approx -1.6728$.\\
(3) For $h(X,Y,Z)=g(X,Y,Z)+X^2$, we have $h_{gp}\approx-1.6728 < h_{sos}\approx-0.5028$ and $h_*\approx0.839$.\\
\end{exm}
To compare the running time efficiency of computation of $f_{sos}$ using semidefinite programming with computation of $f_{gp}$ using geometric programming, we set up a test to keep track of the
running times. All the polynomials were taken randomly of the form $X_1^{2d}+\cdots+X_n^{2d}+g(\ux)$ where $g\in\rx$ is of degree
$\leq 2d-1$. In each case the computation is done for 50 polynomial with coefficients uniformly distributed on a certain symmetric
interval, using \textsc{SosTools} and \textsc{GPposy} for \textsc{Matlab}\footnote{\textbf{Hardware and Software specifications.}
Processor: Intel\textregistered~ Core\texttrademark2 Duo
CPU P8400 @ 2.26GHz, Memory: 2 GB, OS: Ubuntu 10.04-32 bit, \textsc{Matlab}: 7.9.0.529 (R2009b)}.

Although, sometimes there is a large gap between $f_{sos}$
and $f_{gp}$, the running time tables show that computation of $f_{gp}$ is much faster than $f_{sos}$.
\begin{table}[ht]
\caption{Average running time (seconds) to calculate $f_{sos}$}
\centering
\begin{tabular}{|c|ccccc|}
\hline
n$\backslash$2d & 4 & 6 & 8 & 10 & 12\\
\hline
3 & 0.73 & 1 & 1.66 & 2.9 & 6.38\\
4 & 0.98 & 1.8 & 5.7 & 25.5 & - \\
5 & 1.43 & 4.13 & 44.6 & - & -\\
6 & 1.59 & 13.24 & 573 & - & -\\
\hline
\end{tabular}
\end{table}
\begin{table}[ht]
\caption{Average running time (seconds) to calculate $f_{gp}$}
\centering
\begin{tabular}{|c|ccccc|}
\hline
n$\backslash$2d & 4 & 6 & 8 & 10 & 12\\
\hline
3 & 0.08 & 0.08 & 0.12 & 0.28 & 0.36\\
4 & 0.08 & 0.13 & 0.3 & 0.76 & 2.23 \\
5 & 0.08 & 0.25 & 0.8 & 3.42 & -\\
6 & 0.09 & 0.37 & 2.2 & - & -\\
\hline
\end{tabular}
\end{table}
\begin{exm}
Let $f(X,Y,Z)=X^{40}+Y^{40}+Z^{40}-XYZ$.
According to Remark \ref{singleterm}, $f_*=f_{sos}=f_{gp}$. The running time for
computing $f_{gp} \approx-0.686$ using geometric programming was $0.18$ seconds, but when we attempted to compute $f_{sos}$ directly, using semidefinite programming, the machine ran out of memory and halted, after about 4 hours.
\end{exm}
%
\section{Explicit lower bounds}\label{ExpBnds}
%
We explain how the lower bounds for $f$ established in \cite[Section 3]{MGHMM} can be obtained by evaluating the objective function of the geometric program in Corollary \ref{fgpinterior} at suitably chosen feasible points.

Recall that for a (univariate) polynomial of the form $p(t)=t^n- \sum_{i=0}^{n-1} a_it^i$, where each $a_i$ is nonnegative and at least one $a_i$
is nonzero, $C(p)$ denotes the unique positive root of $p$ \cite[Theorem 1.1.3]{VVP}.
See \cite{D}, \cite[Ex. 4.6.2: 20]{K} or \cite[Proposition 1.2]{MGHMM} for more details.
\begin{crl}\label{rl}
If $|\alpha|<2d$ for each $\alpha\in\Delta$ and $f_{2d,i}>0$ for $i=1,\dots,n$, then $f_{gp}\ge r_L$, where
\[
\begin{array}{lcl}
	r_L & := & f_0-\frac{1}{2d}\sum_{\alpha\in\Delta}(2d-|\alpha|)|f_{\alpha}|k^{|\alpha|}(f_{2d}^{-\alpha})^{\frac{1}{2d}}\\
	k & \ge & \max\limits_{i=1,\cdots,n} C(t^{2d}-\frac{1}{2d}\sum_{\alpha\in\Delta}\alpha_i|f_{\alpha}|f_{2d,i}^{-\frac{|\alpha|}{2d}}t^{|\alpha|}).
\end{array}
\]
\end{crl}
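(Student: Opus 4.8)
The plan is to invoke Corollary~\ref{fgpinterior} and then to exhibit one explicit feasible point of its geometric program at which the objective function already equals $\frac{1}{2d}\sum_{\alpha\in\Delta}(2d-|\alpha|)|f_\alpha|k^{|\alpha|}(f_{2d}^{-\alpha})^{1/(2d)}$. Since the hypotheses ($|\alpha|<2d$ on $\Delta$ and $f_{2d,i}>0$) are exactly those of Corollary~\ref{fgpinterior}, we have $f_{gp}=f_0-m^*$, where $m^*$ is the infimum of $\phi_0(a)=\sum_{\alpha\in\Delta}(2d-|\alpha|)\left[\left(\frac{f_\alpha}{2d}\right)^{2d}\alpha^\alpha a_\alpha^{-\alpha}\right]^{1/(2d-|\alpha|)}$ over the arrays $a=(a_{\alpha,i})$ with $\sum_{\alpha\in\Delta}a_{\alpha,i}\le f_{2d,i}$. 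Evaluating $\phi_0$ at any feasible $a$ gives $m^*\le\phi_0(a)$, hence the lower bound $f_{gp}\ge f_0-\phi_0(a)$; the whole proof reduces to choosing $a$ well.

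First I would take, for $\alpha\in\Delta$ and $i=1,\dots,n$,
$$a_{\alpha,i}:=\frac{|f_\alpha|\,\alpha_i}{2d}\,k^{|\alpha|-2d}\,f_{2d,i}^{\,1-|\alpha|/(2d)},$$
which is nonnegative and satisfies $a_{\alpha,i}=0$ iff $\alpha_i=0$ (note $k>0$, as it dominates a positive root, and $f_{2d,i}>0$). A short computation then gives $a_\alpha^\alpha=\prod_i a_{\alpha,i}^{\alpha_i}=\left(\frac{|f_\alpha|}{2d}\right)^{|\alpha|}\alpha^\alpha\,k^{-|\alpha|(2d-|\alpha|)}(f_{2d}^{\alpha})^{(2d-|\alpha|)/(2d)}$. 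Substituting this into the $\alpha$-th summand of $\phi_0$ and using $(\frac{f_\alpha}{2d})^{2d}=(\frac{|f_\alpha|}{2d})^{2d}$ (as $2d$ is even), the bracket simplifies to $(\frac{|f_\alpha|}{2d})^{2d-|\alpha|}k^{|\alpha|(2d-|\alpha|)}(f_{2d}^{\alpha})^{-(2d-|\alpha|)/(2d)}$, and after taking the $(2d-|\alpha|)$-th root and multiplying by $(2d-|\alpha|)$ each term collapses to exactly $\frac{1}{2d}(2d-|\alpha|)|f_\alpha|k^{|\alpha|}(f_{2d}^{-\alpha})^{1/(2d)}$. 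Summing over $\alpha$ yields $\phi_0(a)=f_0-r_L$, so it only remains to verify that this $a$ is feasible.

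The crucial step is checking $\sum_{\alpha\in\Delta}a_{\alpha,i}\le f_{2d,i}$ for each $i$. Dividing by $f_{2d,i}>0$ and multiplying through by $k^{2d}$, this is equivalent to
$$k^{2d}\ge\frac{1}{2d}\sum_{\alpha\in\Delta}\alpha_i\,|f_\alpha|\,f_{2d,i}^{-|\alpha|/(2d)}\,k^{|\alpha|},$$
that is, to $p_i(k)\ge0$, where $p_i(t):=t^{2d}-\frac{1}{2d}\sum_{\alpha\in\Delta}\alpha_i|f_\alpha|f_{2d,i}^{-|\alpha|/(2d)}t^{|\alpha|}$ is precisely the polynomial whose positive root $C(p_i)$ enters the definition of $k$. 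Here I would use the defining property of $C$: since every coefficient $\frac{1}{2d}\alpha_i|f_\alpha|f_{2d,i}^{-|\alpha|/(2d)}$ is nonnegative, the map $t\mapsto p_i(t)/t^{2d}=1-\frac{1}{2d}\sum_{\alpha}\alpha_i|f_\alpha|f_{2d,i}^{-|\alpha|/(2d)}t^{|\alpha|-2d}$ is strictly increasing on $(0,\infty)$ (each exponent $|\alpha|-2d$ is negative), vanishes at $t=C(p_i)$, and is therefore nonnegative for $t\ge C(p_i)$; since $k\ge\max_i C(p_i)\ge C(p_i)$, we get $p_i(k)\ge0$. (When some $p_i$ has no positive root, i.e. $\alpha_i=0$ for all $\alpha\in\Delta$, the constraint reads $0\le f_{2d,i}$ and holds trivially.) Thus $a$ is feasible, so $m^*\le\phi_0(a)$ and $f_{gp}=f_0-m^*\ge f_0-\phi_0(a)=r_L$.

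The main obstacle is pinning down the feasible point: the exponents on $|f_\alpha|$, $k$ and $f_{2d,i}$ must be calibrated so that simultaneously the $\alpha$-th objective term collapses to the desired closed form \emph{and} the $i$-th constraint becomes exactly the root inequality $p_i(k)\ge0$. Once the ansatz $a_{\alpha,i}=\frac{|f_\alpha|\alpha_i}{2d}k^{|\alpha|-2d}f_{2d,i}^{1-|\alpha|/(2d)}$ is in hand, both verifications are routine, the only genuine input being the monotonicity characterization of $C(p)$.
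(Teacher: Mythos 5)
Your proposal is correct and follows essentially the same route as the paper: you choose exactly the same feasible point $a_{\alpha,i}=\frac{\alpha_i}{2d}|f_{\alpha}|k^{|\alpha|-2d}f_{2d,i}^{1-|\alpha|/(2d)}$ for the geometric program of Corollary~\ref{fgpinterior}, verify feasibility via the root bound defining $k$, and evaluate the objective to get $f_0-r_L$. The only difference is that you spell out the monotonicity argument behind $p_i(k)\ge 0$ and the $\alpha_i=0$ edge case, which the paper leaves implicit in its citation of the defining property of $C(p)$.
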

Here, $f_{2d}^{-\alpha}:=\prod_{i=1}^nf_{2d,i}^{-\alpha_i}$.

\begin{proof}
For each $\alpha\in\Delta$ and
$i=1,\cdots,n$. Let
\[a_{\alpha,i}=\frac{\alpha_i}{2dk^{2d-|\alpha|}}|f_{\alpha}|(f_{2d,i})^{1-\frac{|\alpha|}{2d}}.\]
By definition of $k$, for each $i$, $\frac{1}{2d}\sum_{\alpha\in\Delta}\alpha_i|f_{\alpha}|(f_{2d,i})^{-\frac{|\alpha|}{2d}}k^{|\alpha|}\leq k^{2d}$, hence
\[
\sum_{\alpha\in\Delta}a_{\alpha,i} = \sum_{\alpha\in\Delta}\frac{\alpha_i}{2dk^{2d-|\alpha|}}|f_{\alpha}|(f_{2d,i})^{1-\frac{|\alpha|}{2d}}\leq f_{2d,i}.
\]
This shows that the array $(a_{\alpha,i}:\alpha\in\Delta,i=1,\cdots,n)$ is a feasible point for the geometric program in the statement of Corollary \ref{fgpinterior}.
Plugging this into the objective function of the program  yields
\[
\begin{array}{rl}
 & \sum_{\alpha\in\Delta}(2d-|\alpha|)\left[\left(\frac{f_{\alpha}}{2d}\right)^{2d}\prod_{\alpha_i\neq0}\left(\frac{\alpha_i}{a_{\alpha,i}}\right)^{\alpha_i}\right]^{\frac{1}{2d-|\alpha|}}\\
= & \sum_{\alpha\in\Delta}(2d-|\alpha|)\left[\left(\frac{f_{\alpha}}{2d}\right)^{2d}\prod_{\alpha_i\neq0}\left(\frac{2d\alpha_i}{\alpha_i|f_{\alpha}|k^{|\alpha|-2d}}(f_{2d,i})^{\frac{|\alpha|}{2d}-1}\right)^{\alpha_i}\right]^{\frac{1}{2d-|\alpha|}}\\
= & \sum_{\alpha\in\Delta}(2d-|\alpha|)\left[\left(\frac{f_{\alpha}}{2d}\right)^{2d}\prod_{\alpha_i\neq0}\left(\frac{2d}{|f_{\alpha}|}k^{2d-|\alpha|}(f_{2d,i})^{\frac{|\alpha|-2d}{2d}}\right)^{\alpha_i}\right]^{\frac{1}{2d-|\alpha|}}\\
= & \frac{1}{2d}\sum_{\alpha\in\Delta}(2d-|\alpha|)|f_{\alpha}|k^{|\alpha|}(f_{2d}^{-\alpha})^{\frac{1}{2d}},
\end{array}
\]
so $r_L=f_0-\frac{1}{2d}\sum_{\alpha\in\Delta}(2d-|\alpha|)|f_{\alpha}|k^{|\alpha|}(f_{2d}^{-\alpha})^{\frac{1}{2d}}\leq f_{gp}$.
\end{proof}
\begin{crl}
	\label{rfk}
If $|\alpha|<2d$ for each $\alpha\in\Delta$ and $f_{2d,i}>0$ for $i=1,\dots,n$, then $f_{gp}\geq r_{FK}$, where $r_{FK} :=f_0-k^{2d}$,
$k \ge C(t^{2d}-\sum_{i=1}^{2d-1}b_it^i)$,
$$b_i :=\frac{1}{2d}(2d-i)^{\frac{2d-i}{2d}}\sum_{\alpha\in\Delta, |\alpha|=i}|f_{\alpha}|(\alpha^{\alpha}f_{2d}^{-\alpha})^{\frac{1}{2d}}, \ i=1,\dots, 2d-1.$$
\end{crl}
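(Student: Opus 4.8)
The plan is to prove $f_{gp}\ge r_{FK}$ in exactly the spirit of Corollary \ref{rl}: exhibit a single feasible point for the geometric program of Corollary \ref{fgpinterior} at which the objective function is at most $k^{2d}$. Since Corollary \ref{fgpinterior} gives $f_{gp}=f_0-m^*$, where $m^*$ is the minimum of that program, and since $m^*\le\phi_0(a)$ for every feasible array $a=(a_{\alpha,i})$, any such point yields $f_{gp}=f_0-m^*\ge f_0-\phi_0(a)\ge f_0-k^{2d}=r_{FK}$.

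First I would choose the feasible point
\[
a_{\alpha,i}=\frac{f_{2d,i}}{2d}\,(2d-|\alpha|)^{\frac{2d-|\alpha|}{2d}}\,k^{|\alpha|-2d}\,|f_{\alpha}|\,(\alpha^{\alpha}f_{2d}^{-\alpha})^{\frac{1}{2d}}
\]
for $\alpha\in\Delta$ and those $i$ with $\alpha_i\ne0$, setting $a_{\alpha,i}=0$ when $\alpha_i=0$ so that the array respects the convention $a_{\alpha,i}=0$ iff $\alpha_i=0$. The key structural feature is that, for each fixed $\alpha$, this is simply $c_{\alpha}f_{2d,i}$ with $c_{\alpha}$ independent of $i$; this is the natural inhomogeneous analogue of the array used for Corollary \ref{fkimp}, the extra factors $(2d-|\alpha|)^{(2d-|\alpha|)/2d}$ and $k^{|\alpha|-2d}$ accounting for the homogenizing variable $Y$.

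Next I would check feasibility. Because $a_{\alpha,i}=c_{\alpha}f_{2d,i}$ on its support, $\sum_{\alpha\in\Delta}a_{\alpha,i}\le f_{2d,i}\sum_{\alpha\in\Delta}c_{\alpha}$, and grouping $\sum_{\alpha}c_{\alpha}$ according to the value $|\alpha|=i$ collapses it, by the definition of $b_i$, to $\sum_{i=1}^{2d-1}b_ik^{i-2d}=k^{-2d}\sum_{i=1}^{2d-1}b_ik^i$. Since $k\ge C(t^{2d}-\sum_{i=1}^{2d-1}b_it^i)$ and that polynomial is nonnegative for arguments at least its unique positive root, we get $\sum_{i=1}^{2d-1}b_ik^i\le k^{2d}$, hence $\sum_{\alpha}c_{\alpha}\le1$ and all the constraints $\sum_{\alpha}a_{\alpha,i}\le f_{2d,i}$ hold.

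Finally I would compute the objective. Using $a_{\alpha}^{\alpha}=c_{\alpha}^{|\alpha|}\prod_{i=1}^{n}f_{2d,i}^{\alpha_i}$, each summand of $\phi_0(a)$ collapses, after cancelling the powers of $2d$, $(2d-|\alpha|)$, $|f_{\alpha}|$ and $\alpha^{\alpha}f_{2d}^{-\alpha}$, to
\[
\frac{1}{2d}(2d-|\alpha|)^{\frac{2d-|\alpha|}{2d}}k^{|\alpha|}|f_{\alpha}|(\alpha^{\alpha}f_{2d}^{-\alpha})^{\frac{1}{2d}},
\]
and summing and regrouping by $|\alpha|=i$ gives $\phi_0(a)=\sum_{i=1}^{2d-1}b_ik^i\le k^{2d}$. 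Combined with the opening paragraph this yields $f_{gp}\ge r_{FK}$. I expect the only real work to be the exponent bookkeeping in this last computation — verifying that the chosen $c_{\alpha}$ makes every factor collapse correctly — which is the analogue of, and slightly more delicate than, the corresponding calculation in the proof of Corollary \ref{rl}.
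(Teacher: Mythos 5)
Your proposal is correct and coincides with the paper's own proof: you use the identical feasible point $a_{\alpha,i}=(2d-|\alpha|)^{\frac{2d-|\alpha|}{2d}}\frac{|f_{\alpha}|}{2d}(\alpha^{\alpha}f_{2d}^{-\alpha})^{1/2d}f_{2d,i}k^{|\alpha|-2d}$, verify feasibility via $\sum_{i=1}^{2d-1}b_ik^i\le k^{2d}$ in the same way, and collapse the objective to $\sum_{i=1}^{2d-1}b_ik^i$ exactly as the paper does (your explicit handling of the convention $a_{\alpha,i}=0$ iff $\alpha_i=0$ is a minor tidiness the paper leaves implicit). No gaps; the exponent bookkeeping you flag does check out.
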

\begin{proof}
Define
\[
	 a_{\alpha,i}:=(2d-|\alpha|)^{\frac{2d-|\alpha|}{2d}}\frac{|f_{\alpha}|}{2d}(\alpha^{\alpha}f_{2d}^{-\alpha})^{1/2d}f_{2d,i}k^{|\alpha|-2d}.
\]
Note that $\sum_{i=1}^{2d-1}b_ik^i\leq k^{2d}$ and, for each $i=1,\ldots,n$,
\[
\begin{array}{lcl}
	\displaystyle{\sum_{\alpha\in\Delta}a_{\alpha,i}} & = & \displaystyle{\sum_{\alpha\in\Delta}(2d-|\alpha|)^{\frac{2d-|\alpha|}{2d}}\frac{|f_{\alpha}|}{2d}(\alpha^{\alpha}f_{2d}^{-\alpha})^{\frac{1}{2d}}f_{2d,i}k^{|\alpha|-2d}}\\
	& = & \displaystyle{\sum_{j=1}^{2d-1}\sum_{\alpha\in\Delta,|\alpha|=j}(2d-j)^{\frac{2d-j}{2d}}\frac{|f_{\alpha}|}{2d}(\alpha^{\alpha}f_{2d}^{-\alpha})^{\frac{1}{2d}}f_{2d,i}k^{j-2d}}\\
	& = & \displaystyle{f_{2d,i}\sum_{j=1}^{2d-1}\frac{1}{2d}k^{-2d}k^j(2d-j)^{\frac{2d-j}{2d}}\sum_{\alpha\in\Delta,|\alpha|=j}|f_{\alpha}|(\alpha^{\alpha}f_{2d}^{-\alpha})^{\frac{1}{2d}}}\\
	& = & \displaystyle{f_{2d,i}k^{-2d}\sum_{j=1}^{2d-1}b_jk^j}\\
	& \leq & f_{2d,i}.
\end{array}
\]
Hence, $(a_{\alpha,i}:\alpha\in\Delta, i=1,\cdots,n)$ belongs to the feasible set of the geometric program in Corollary \ref{fgpinterior}. Plugging into the objective function, one sees after some effort that
$$ \sum_{\alpha\in\Delta}(2d-|\alpha|)\left[\left(\frac{f_{\alpha}}{2d}\right)^{2d}\alpha^{\alpha}a_{\alpha}^{-\alpha}\right]^{\frac{1}{2d-|\alpha|}} = \sum_{j=1}^{2d-1} b_jk^j \le k^n,$$
so $r_{FK}\leq f_{sos}$.
\end{proof}
\begin{crl}	\label{rdmt}
If $|\alpha|<2d$ for each $\alpha\in\Delta$ and $f_{2d,i}>0$ for $i=1,\dots,n$, then
\[
f_{gp}\geq r_{dmt}:=f_0-\sum_{\alpha\in\Delta}(2d-|\alpha|)\left[\left(\frac{f_{\alpha}}{2d}\right)^{2d}t^{|\alpha|}\alpha^{\alpha}f_{2d}^{-\alpha}\right]^{\frac{1}{2d-|\alpha|}},
\]
where $t:=|\Delta|$.
\end{crl}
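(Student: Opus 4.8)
The plan is to mirror the strategy used in the proofs of Corollaries \ref{rl} and \ref{rfk}: exhibit an explicit feasible point for the geometric program in Corollary \ref{fgpinterior} and evaluate the objective function at it. Since $f_{gp}=f_0-m^*$, where $m^*$ is the minimum of that objective, every feasible array $a$ satisfies $f_0-(\text{objective at }a)\le f_0-m^*=f_{gp}$. Hence it suffices to produce a feasible $a$ for which the objective value equals exactly the sum appearing in the definition of $r_{dmt}$; the inequality $f_{gp}\ge r_{dmt}$ then follows immediately.

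Comparing $r_{dmt}$ with the objective $\sum_{\alpha\in\Delta}(2d-|\alpha|)\left[\left(\frac{f_{\alpha}}{2d}\right)^{2d}\alpha^{\alpha}a_{\alpha}^{-\alpha}\right]^{\frac{1}{2d-|\alpha|}}$, I read off that I want a feasible point with $a_{\alpha}^{-\alpha}=t^{|\alpha|}f_{2d}^{-\alpha}$, equivalently $a_{\alpha}^{\alpha}=t^{-|\alpha|}f_{2d}^{\alpha}$. The natural choice is the uniform assignment
\[
a_{\alpha,i}=\left\lbrace\begin{array}{ll} f_{2d,i}/t & \textrm{if } \alpha_i\ne 0\\ 0 & \textrm{if } \alpha_i=0,\end{array}\right.
\]
for which $a_{\alpha}^{\alpha}=\prod_{i:\alpha_i\ne 0}(f_{2d,i}/t)^{\alpha_i}=t^{-|\alpha|}f_{2d}^{\alpha}$, as required, and $a_{\alpha,i}=0$ iff $\alpha_i=0$ because $f_{2d,i}/t>0$. (This is well defined provided $\Delta\ne\emptyset$, i.e.\ $t\ge1$; when $\Delta=\emptyset$ the sum in $r_{dmt}$ is empty and the asserted bound reduces to the trivial $f_{gp}\ge f_0$, which in fact holds with equality.)

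Next I would verify feasibility. Using $|\Delta|=t$ and the fact that each sum $\sum_{\alpha\in\Delta}a_{\alpha,i}$ has at most $|\Delta|=t$ nonzero terms, each equal to $f_{2d,i}/t$,
\[
\sum_{\alpha\in\Delta}a_{\alpha,i}=\sum_{\alpha\in\Delta,\,\alpha_i\ne 0}\frac{f_{2d,i}}{t}\le t\cdot\frac{f_{2d,i}}{t}=f_{2d,i},
\]
so the array lies in the feasible set of the program in Corollary \ref{fgpinterior}. Substituting $a_{\alpha}^{-\alpha}=t^{|\alpha|}f_{2d}^{-\alpha}$ into the objective reproduces $\sum_{\alpha\in\Delta}(2d-|\alpha|)\left[\left(\frac{f_{\alpha}}{2d}\right)^{2d}t^{|\alpha|}\alpha^{\alpha}f_{2d}^{-\alpha}\right]^{\frac{1}{2d-|\alpha|}}$, whence $f_{gp}\ge f_0-(\text{this sum})=r_{dmt}$.

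There is no real obstacle here: the entire content is the choice of the uniform feasible point $a_{\alpha,i}=f_{2d,i}/t$. The only point requiring a moment's care is the feasibility inequality, which rests on the crude estimate that each index $i$ receives at most $|\Delta|=t$ equal contributions. This crudeness is precisely why the factor $t^{|\alpha|}$ appears in $r_{dmt}$, rather than the sharper data-dependent quantities of Corollaries \ref{rl} and \ref{rfk}; consequently $r_{dmt}$ is the weakest, but most explicit and transparent, of the three lower bounds, since it does not require computing any root $C(\cdot)$ of an auxiliary univariate polynomial.
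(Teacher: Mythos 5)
Your proof is correct and takes exactly the paper's approach: the paper's entire proof is the one-line instruction to take $a_{\alpha,i}=\frac{f_{2d,i}}{t}$ and apply Corollary \ref{fgpinterior}, and your write-up simply fills in the feasibility check and the evaluation of the objective that the paper leaves implicit. Your handling of the degenerate case $\Delta=\emptyset$ (where $t=0$) is a small but legitimate addition not addressed in the paper.
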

\begin{proof}
Take $a_{\alpha,i}=\frac{f_{2d,i}}{t}$ and apply Corollary \ref{fgpinterior}.
\end{proof}
\begin{rem}
Let $C$ be a cone in a finite dimensional real vector space $V$. Let $C^{\circ}$ denote the interior of $C$. If $a \in C^{\circ}$ and
$b\in V$ then $b\in C^{\circ}$ if and only if $b-\epsilon a \in C$ for some real $\epsilon>0$ (See \cite[Lemma 6.1.3]{M1} or \cite[Remark 2.6]{MGHMM}).
Since $\sum_{i=1}^nX_i^{2d}\in\Sigma_{2d,n}^{\circ}$ \cite[Corollary 2.5]{MGHMM}, for a polynomial $f$ of degree $2d$,
with $f_{2d}\in\Sigma_{2d,n}^{\circ}$, there exists an $\epsilon>0$ such that $g=f_{2d}-\epsilon(\sum_{i=1}^nX_i^{2d})\in\Sigma_{2d,n}$.
The hypothesis of Corollary \ref{fgpinterior} holds for $f-g$. In this way, corollaries \ref{fgpinterior}, \ref{rl}, \ref{rfk} and \ref{rdmt}, provide lower bounds
for $f_{sos}$. Moreover, the lower bounds obtained in this way, using corollaries \ref{rl}, \ref{rfk} and \ref{rdmt}, are exactly the lower bounds obtained in \cite{MGHMM}.
\end{rem}
The bounds $r_L$, $r_{FK}$, $r_{dmt}$ provided by corollaries \ref{rl}, \ref{rfk} and \ref{rdmt} are typically not as good as the bound $f_{gp}$ provided by Corollary \ref{fgpinterior}
.
\begin{exm} (Compare to \cite[Example 4.2]{MGHMM})\\
(a) For $f(X,Y) = X^6+Y^6+7XY-2X^2+7$, we have $r_L\approx-1.124$, $r_{FK} \approx-0.99$, $r_{dmt}\approx-1.67$ and $f_{sos}=f_{gp}\approx-0.4464$,
so $f_{gp}>r_{FK}>r_L>r_{dmt}$.\\
(b) For $f(X,Y) = X^6+Y^6+4XY+10Y+13$, $r_L \approx -0.81$, $r_{FK} \approx -0.93$, $r_{dmt} \approx -0.69$ and $f_{gp}\approx0.15\approx f_{sos}$,
so $f_{gp}>r_{dmt} > r_L> r_{FK}$.\\
(c) For $f(X,Y)= X^4+Y^4+XY-X^2-Y^2+1$, $f_{sos}=f_{gp}=r_L = -0.125$, $r_{FK} \approx -0.832$ and $r_{dmt} \approx -0.875$, so $f_{gp}=r_L>r_{FK} > r_{dmt}$.
\end{exm}
%

%
\end{document}